\definecolor{citegreen}{rgb}{0.2,0.2,0.6}
\newcommand{\beq}{\begin{equation} \begin{split}}
\newcommand{\eeq}{\end{split} \end{equation}}
\newcommand{\comm}[1]{}
\def\bm1{\mathbbm{1}}
\def\dd{{\mathsf{d}}}
\newcounter{counter_a}
\numberwithin{figure}{section}
\numberwithin{equation}{section}
\theoremstyle{plain}
\newtheorem*{thm*}{Theorem}
\newtheorem{thm}{Theorem}[section]
\theoremstyle{remark}
\newtheorem{remark}[thm]{Remark}
\theoremstyle{plain}
\newcommand{\beu}{\begin{equation*}}
\newcommand{\eeu}{\end{equation*}}
\newcommand{\besu}{\begin{equation*}
\begin{aligned}}
\newcommand{\eesu}{\end{aligned}
\end{equation*}}
\newcommand{\bes}{\begin{equation}
\begin{aligned}}
\newcommand{\ees}{\end{aligned}
\end{equation}}
\newcommand\cH{\mathcal H}
\newcommand\void[1]{}
   \def\dN{{\mathbb N}}
   \def\cH{{\mathcal H}}
\newcommand{\lm}{\lambda}
\newcommand{\ie}{\emph{i.e.}}
\newcommand{\eg}{\emph{e.g.}}
\newcommand{\cf}{\emph{cf.}}
\newcommand{\Real}{\mathbb{R}}
\newcommand{\Com}{\mathbb{C}}
\newcommand{\supp}{\mathop{\mathrm{supp}}\nolimits}
\newcommand{\Dom}{\mathsf{D}}
\newcommand{\sii}{L^2}
\newcommand{\der}{\mathrm{d}}
\newtheorem{Theorem}{Theorem}
\newtheorem{Proposition}{Proposition}
\newtheorem{Conjecture}{Conjecture}
\theoremstyle{definition}
\definecolor{DarkGreen}{rgb}{0,0.5,0.1} 
\newcommand\soutD{\bgroup\markoverwith
{\textcolor{DarkGreen}{\rule[.5ex]{2pt}{1pt}}}\ULon}
\newcommand{\Hm}[1]{\leavevmode{\marginpar{\tiny%
$\hbox to 0mm{\hspace*{-0.5mm}$\leftarrow$\hss}%
\vcenter{\vrule depth 0.1mm height 0.1mm width \the\marginparwidth}%
\hbox to
0mm{\hss$\rightarrow$\hspace*{-0.5mm}}$\\\relax\raggedright #1}}}
\begin{document}
%
\title{Optimisation of the lowest Robin eigenvalue in the exterior of a compact set}
\author{David Krej\v{c}i\v{r}{\'\i}k} 
\address{Department of Mathematics, Faculty of Nuclear Sciences and 
Physical Engineering, Czech Technical University in Prague, Trojanova 13, 12000 Prague 2,
Czech Republic}
\email{david.krejcirik@fjfi.cvut.cz}

\author{Vladimir Lotoreichik}
\address{Department of Theoretical Physics, Nuclear Physics Institute, 
Czech Academy of Sciences, 25068 \v Re\v z, Czech Republic}
\email{lotoreichik@ujf.cas.cz}
\date{\small 26 September 2016}
%

%
\begin{abstract}
\noindent
We consider the problem of geometric optimisation of the lowest eigenvalue
of the Laplacian in the exterior of a compact planar set,
subject to attractive Robin boundary conditions.
Under either a constraint of fixed perimeter or area,
we show that the maximiser within the class of exteriors of convex sets
is always the exterior of a disk. We also argue why the results fail without the convexity constraint and in higher dimensions.
\end{abstract}

\keywords{Robin Laplacian, negative boundary parameter,
exterior of a convex set, lowest eigenvalue, spectral isoperimetric inequality, 
spectral isochoric inequality, parallel coordinates}
\subjclass[2010]{35P15 (primary); 58J50 (secondary)} 

\maketitle

\section{Introduction}
%
The \emph{isoperimetric inequality} states that 
among all planar sets of a given perimeter,
the disk has the largest area. 
This is equivalent to the \emph{isochoric inequality} stating that
among all planar sets of a given area,
the disk has the smallest perimeter.
These two classical geometric optimisation problems
were known to ancient Greeks, 
but a first rigorous proof appeared only in the 19th century
(see~\cite{Blasjo_2005} for an overview). 

Going from geometric to spectral quantities,
the \emph{spectral isochoric inequality} states that 
among all planar membranes of a given area and with fixed edges, 
the circular membrane produces the lowest fundamental tone. 
It was conjectured by Lord Rayleigh in 1877 in his famous book
\emph{The theory of sound} \cite{Rayleigh_1877}, 
but proved only by Faber~\cite{Faber_1923} and Krahn~\cite{Krahn_1924} 
almost half a century later.
Using scaling, 
it is easily seen that this result implies 
the \emph{spectral isoperimetric inequality} as well: 
among all planar membranes of a given perimeter and with fixed edges, 
the circular membrane produces the lowest fundamental tone. 

The same spectral inequalities under the area or perimeter constraints
extend to elastically supported membranes.
To be more precise, let $\Omega \subset \Real^2$ be 
any smooth bounded open set.
Given a real number~$\alpha$, consider the spectral problem 
for the Laplacian, subject to Robin boundary conditions,
\begin{equation}\label{problem}
\left\{
\begin{aligned}
  -\Delta u &= \lambda u 
  && \mbox{in} \quad \Omega \,,
  \\
  \frac{\partial u}{\partial n}+\alpha \;\! u&=0 
  && \mbox{on} \quad \partial\Omega \,,
\end{aligned}
\right.
\end{equation}
where~$n$ is the \emph{outer} unit normal to~$\Omega$.
It is well known that~\eqref{problem} induces an infinite number 
of eigenvalues 
$
  \lambda_1^\alpha(\Omega) \le \lambda_2^\alpha(\Omega) 
  \leq \lambda_3^\alpha(\Omega) \leq \dots
$ 
diverging to infinity.
The lowest eigenvalue admits the variational characterisation
\begin{equation}\label{Rayleigh}
  \lambda_1^\alpha(\Omega)
  = \inf_{\stackrel[u\not=0]{}{u \in W^{1,2}(\Omega)}}
  \frac{\displaystyle \int_\Omega |\nabla u|^2 + \alpha \int_{\partial\Omega} |u|^2}
  {\displaystyle \, \int_{\Omega} |u|^2}
  \,,
\end{equation}
from which it is clear that $\lambda_1^\alpha(\Omega)$ is non-negative 
if, and only if, $\alpha$~is non-negative.
In this elastic regime, the spectral isoperimetric and isochoric inequalities 
for the Robin Laplacian can be respectively stated as follows
\begin{flalign}\label{Bossel}
  \fbox{$\alpha \geq 0$} &&
  \min_{|\partial\Omega| = c_1} \lambda_1^\alpha(\Omega) 
  = \lambda_1^\alpha(B_{R_1})
  \qquad \mbox{and} \qquad
  \min_{|\Omega| = c_2} \lambda_1^\alpha(\Omega) 
  = \lambda_1^\alpha(B_{R_2})
  \,.
  &&
\end{flalign}
Here~$B_{R_1}$ and~$B_{R_2}$ are disks of perimeter $|\partial B_{R_1}| = c_1$
and area $|B_{R_2}| = c_2$, respectively, 
with~$c_1$ and~$c_2$ being two arbitrary positive constants.
With an abuse of notation, we denote by~$|\Omega|$ and~$|\partial\Omega|$ 
the $2$-dimensional Lebesgue measure of~$\Omega$
and the $1$-dimensional Hausdorff measure of its boundary~$\partial\Omega$,
respectively. 
In the Dirichlet case (formally corresponding to setting $\alpha=\infty$), 
the second optimisation problem in~\eqref{Bossel} 
is just the Rayleigh-Faber-Krahn inequality mentioned above.
Both the statements in~\eqref{Bossel} are trivial 
for Neumann boundary conditions ($\alpha=0$).
If~$\alpha > 0$, the second result in~\eqref{Bossel} 
is due to Bossel~\cite{Bossel_1986} from 1986
(and Daners~\cite{Daners_2006} from 2006, also for higher dimensions),
while the first identity again follows by scaling.

Summing up, going from the ancient isoperimetric inequality
to the most recent spectral results on the Robin problem with positive 
boundary coupling parameter, the disk turns out to be always the extremal set
for the optimisation problems under the area or perimeter constraints.
Moreover, the isoperimetric and isochoric optimisation problems are closely related. 
In the last two years, however, it has been noticed that the optimisation
of the Robin eigenvalue in the case of \emph{negative}~$\alpha$ 
is quite different. Here the story goes back to 1977 
when Bareket conjectured that a \emph{reverse spectral isochoric inequality}
should hold:
\setcounter{Conjecture}{-1}
\begin{Conjecture}[Bareket's \emph{reverse spectral isochoric inequality}~\cite{Bareket_1977}]
\label{Conj}
For all negative~$\alpha$, we have
\begin{flalign}\label{Bareket}
  \fbox{$\alpha < 0$} &&
  \max_{|\Omega| = c_2} \lambda_1^\alpha(\Omega) 
  = \lambda_1^\alpha(B_{R_2})
  \,,
  &&
\end{flalign}
where the maximum is taken over all bounded open connected sets~$\Omega$ 
of a given area~$c_2 > 0$
and~$B_{R_2}$ is the disk of the same area as~$\Omega$, 
\ie\ $|B_{R_2}| = c_2$.   
\end{Conjecture}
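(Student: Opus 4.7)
The plan is to adapt Bossel's method of parallel coordinates, which successfully treats the positive-$\alpha$ case summarised in~\eqref{Bossel}. Let $u$ be a normalised first eigenfunction on $\Omega$, and introduce the inner parallel sets $\Omega_t := \{x \in \Omega : \dist(x,\partial\Omega) > t\}$, together with the associated profile functions $A(t) := |\Omega_t|$ and $L(t) := |\partial\Omega_t|$, defined for a.e.~$t \in (0, r_{\max})$, where $r_{\max}$ denotes the inradius. The coarea formula rewrites each of the three integrals appearing in the Rayleigh quotient~\eqref{Rayleigh} in terms of the one-dimensional profile of $u$ along these level sets, thereby reducing the two-dimensional spectral problem to a one-dimensional comparison on $(0, r_{\max})$.

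I would then construct a radial transplantation $\tilde u$ on $B_{R_2}$ depending only on $\dist(\cdot, \partial B_{R_2})$, built from the profile of $u$ so as to preserve the $L^2$ norm and to permit a term-by-term comparison of the three integrals in~\eqref{Rayleigh}. Inserting $\tilde u$ into the Rayleigh quotient for $B_{R_2}$ yields an upper bound on $\lambda_1^\alpha(B_{R_2})$; the aim is to show, using the classical isoperimetric inequality $L(t)^2 \geq 4\pi A(t)$ together with the equal-area hypothesis $|\Omega| = |B_{R_2}|$, that this upper bound dominates $\lambda_1^\alpha(\Omega)$.

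The principal obstacle is the boundary term $\alpha \int_{\partial\Omega} |u|^2$, which now carries the \emph{unfavourable} sign. In the Bossel--Daners treatment of the positive case, the classical isoperimetric inequality cooperates with the Robin penalty, since increasing the perimeter of a level set increases both the kinetic energy and the positive boundary contribution. For $\alpha < 0$ the mechanisms decouple: attractive boundary conditions concentrate the eigenfunction exponentially near $\partial\Omega$, and $\lambda_1^\alpha$ becomes highly sensitive to the local geometry of the boundary (recall the asymptotics $\lambda_1^\alpha \sim -\alpha^2$ as $\alpha \to -\infty$). Consequently, the isoperimetric inequality is invoked in the wrong direction in the boundary term, and the naive transplantation cannot absorb this mismatch.

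I therefore expect the parallel-coordinate comparison to close only under additional hypotheses, \eg\ convexity of $\Omega$ or smallness of $|\alpha|$, and to fail for general domains. This is consistent with the authors' decision, announced in the abstract, to restrict attention to the exterior problem under a convexity constraint, and suggests that in the interior setting of Bareket's conjecture itself some extremal geometry other than the disk may eventually take over for large $|\alpha|$.
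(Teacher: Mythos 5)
You have been asked to prove a statement that the paper itself does not prove and that is in fact \emph{false} as stated. The item is labelled a \emph{Conjecture} (Bareket, 1977), and the surrounding text of the introduction records explicitly that it was disproved in~\cite{FK7}: the identity~\eqref{Bareket} holds for small negative values of~$\alpha$, but for $\alpha \to -\infty$ a suitable annulus yields a strictly larger value of $\lambda_1^\alpha$ than the disk of the same area, so no proof valid ``for all negative~$\alpha$'' can exist. The paper's own positive results (Theorem~\ref{Thm}) concern a different setting altogether --- the \emph{exterior} of a convex compact set --- and are not a proof of Conjecture~\ref{Conj}.

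That said, your analysis is essentially the correct resolution rather than a failed proof. You correctly identify that the Bossel--Daners parallel-coordinate transplantation breaks down because the boundary term $\alpha\int_{\partial\Omega}|u|^2$ carries the unfavourable sign for $\alpha<0$, so the classical isoperimetric inequality $L(t)^2\ge 4\pi A(t)$ is invoked in the wrong direction; and your closing remark that ``some extremal geometry other than the disk may eventually take over for large $|\alpha|$'' is precisely what~\cite{FK7} established with the annulus. The one adjustment you should make is to stop presenting this as a proof attempt with a gap to be closed: the gap is not closable, and the honest conclusion is that the conjecture fails in general, holds for $|\alpha|$ small (as shown for nearly circular domains in~\cite{Bareket_1977,Ferone-Nitsch-Trombetti_2015} and for small $|\alpha|$ in~\cite{FK7}), and remains open only under additional geometric restrictions such as convexity of~$\Omega$.
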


Notice that, contrary to~\eqref{Bossel}, it is natural to maximise  
the eigenvalue if~$\alpha$ is negative. 
The conjecture was supported 
by its validity for sets close to the disk
~\cite{Bareket_1977,Ferone-Nitsch-Trombetti_2015}
and revived both in~\cite{Brock-Daners_2007} and~\cite{Daners_2013}.
However, Freitas and one of the present authors
have recently disproved the conjecture~\cite{FK7}:
While~\eqref{Bareket} holds for small negative values of~$\alpha$,
it cannot hold for all values of the boundary parameter.
In fact, the annulus provides a larger value of the lowest 
Robin eigenvalue as $\alpha \to -\infty$.
This provides the first known example where
the extremal domain for the lowest eigenvalue of 
the Robin Laplacian is not a disk.
On the other hand, in the most recent publication~\cite{AFK},
it was shown that the \emph{reverse spectral isoperimetric inequality}
does hold:
\setcounter{Theorem}{-1}
\begin{Theorem}[\emph{Reverse spectral isoperimetric inequality}~\cite{AFK}]
For all negative~$\alpha$, we have
\begin{flalign}\label{isoperimeter}
  \fbox{$\alpha < 0$} &&
  \max_{|\partial\Omega| = c_1} \lambda_1^\alpha(\Omega) 
  = \lambda_1^\alpha(B_{R_1})
  \,,
  &&
\end{flalign}
where the maximum is taken over all smooth bounded open connected sets~$\Omega$ 
of a given perimeter~$c_1 > 0$
and~$B_{R_1}$ is the disk of the same perimeter as~$\Omega$, 
\ie\ $|\partial B_{R_1}| =c_1$.   
\end{Theorem}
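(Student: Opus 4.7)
The plan is to bound $\lambda_1^\alpha(\Omega)$ from above by testing the variational characterisation~\eqref{Rayleigh} against a function built from the radial ground state on $B_{R_1}$, using parallel coordinates relative to $\partial\Omega$, and then to estimate the Rayleigh quotient via the coarea formula combined with Fiala's classical inner-parallel inequality. Set $R_1:=c_1/(2\pi)$, $L:=\lambda_1^\alpha(B_{R_1})$, and note that testing~\eqref{Rayleigh} on $B_{R_1}$ with the constant function~$1$ yields $L\le\alpha|\partial B_{R_1}|/|B_{R_1}|<0$; this sign will be essential.

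Let $F:[0,R_1]\to(0,\infty)$ denote the positive, strictly increasing radial profile of the first Robin eigenfunction on $B_{R_1}$, solving $-F''(r)-r^{-1}F'(r)=LF(r)$ together with $F'(0)=0$ and $F'(R_1)=-\alpha F(R_1)$. Assuming first that $\Omega$ is simply connected, introduce $\rho(x):=\dist(x,\partial\Omega)$ and $\phi(t):=F(R_1-t)$, and take as test function
$$
  v(x):=F\bigl(R_1-\rho(x)\bigr),\qquad x\in\Omega.
$$
This $v$ is Lipschitz, hence admissible in $W^{1,2}(\Omega)$, and well defined since the inradius satisfies $r_\Omega\le R_1$: the inscribed disk gives $|\Omega|\ge\pi r_\Omega^2$, and the classical isoperimetric inequality then yields $c_1=|\partial\Omega|\ge 2\pi r_\Omega$.

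Because $|\nabla\rho|=1$ almost everywhere, the coarea formula produces
$$
  \int_\Omega|\nabla v|^2=\int_0^{r_\Omega}\phi'(t)^2P(t)\,\der t,\qquad
  \int_\Omega v^2=\int_0^{r_\Omega}\phi(t)^2P(t)\,\der t,
$$
with $P(t):=\mathcal H^1(\{\rho=t\})$, while $\int_{\partial\Omega}v^2=\phi(0)^2c_1$. Multiplying the ODE for $F$ by $Fr$ and integrating over $(0,R_1)$ produces the disk identity
$$
  \int_0^{R_1}\bigl(\phi'(t)^2-L\phi(t)^2\bigr)\,2\pi(R_1-t)\,\der t=-\alpha\phi(0)^2c_1,
$$
so that proving the Rayleigh quotient bound $\le L$ reduces to the one-dimensional inequality
$$
  \int_0^{r_\Omega}\bigl(\phi'(t)^2-L\phi(t)^2\bigr)P(t)\,\der t\le\int_0^{R_1}\bigl(\phi'(t)^2-L\phi(t)^2\bigr)\,2\pi(R_1-t)\,\der t.
$$
Since $L<0$ the integrand is pointwise non-negative; Fiala's inequality $P(t)\le c_1-2\pi t=2\pi(R_1-t)$, valid for smooth simply connected planar domains, together with the trivial extension of the integration interval from $[0,r_\Omega]$ to $[0,R_1]$, then delivers the estimate and hence the conclusion.

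The principal obstacle is the multiply connected case, in which Fiala's inequality fails in general (an annulus has $P(t)$ constant). I would handle this by a preliminary reduction, filling the holes of $\Omega$ to obtain a simply connected $\widetilde\Omega\supset\Omega$ with $|\partial\widetilde\Omega|\le c_1$, arguing that this operation does not decrease $\lambda_1^\alpha$ (heuristically, removing attractive boundary raises the eigenvalue when $\alpha<0$), and concluding through the simply connected case together with the monotonicity $R\mapsto\lambda_1^\alpha(B_R)$, which follows from scaling. A minor technicality is that $\rho$ is only Lipschitz and non-smooth on the cut locus, but $|\nabla\rho|=1$ almost everywhere suffices for both the coarea formula and the admissibility of~$v$. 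Finally, uniqueness of the maximiser is extracted from the strict positivity of $\phi'^2-L\phi^2$, which forces $r_\Omega=R_1$ at equality and, via the isoperimetric inequality, identifies $\Omega$ with $B_{R_1}$.
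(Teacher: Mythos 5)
First, a point of orientation: the present paper does not prove this statement at all --- it is Theorem~0, imported verbatim from~\cite{AFK} --- so the only in-house comparison is with the proof of Theorem~\ref{Thm} in Section~\ref{Sec.proof}, of which your argument is the interior mirror image. Your construction (transplanting the radial ground state of $B_{R_1}$ along the level sets of $\dist(\cdot,\partial\Omega)$, reducing via the coarea formula to a one-dimensional comparison of weights, and invoking $P(t)\le c_1-2\pi t$) is essentially the argument of~\cite{AFK}; in the paper's exterior setting the same idea works more cleanly because the parallel coordinates are global, the weight is exactly $|\Sigma|+2\pi t$, and there is no cut locus. For \emph{simply connected} $\Omega$ your proof is correct, including the sign bookkeeping: since $L<0$ the factor $\phi'^2-L\phi^2$ is pointwise positive, so both the replacement $P(t)\le 2\pi(R_1-t)$ and the extension of the integration interval from $[0,r_\Omega]$ to $[0,R_1]$ go in the right direction, and the cut-locus technicalities you flag are indeed standard.

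The gap is in the multiply connected case, which the theorem as stated does cover (connected is weaker than simply connected, and you correctly note that Fiala's inequality fails already for an annulus). Two steps of your reduction are unproved, and one is justified by a false claim. (i) The monotonicity of $R\mapsto\lambda_1^\alpha(B_R)$ does \emph{not} follow from scaling: scaling only gives $\lambda_1^\alpha(B_R)=R^{-2}\,\lambda_1^{\alpha R}(B_1)$, and as $R$ grows the prefactor $R^{-2}$ pushes the (negative) eigenvalue up while the strengthened coupling $\alpha R\to-\infty$ pushes it down, so the sign of the derivative is undetermined at this level. This monotonicity is precisely~\cite[Thm.~5]{AFK} and requires Bessel-function estimates of the kind the present paper devotes Propositions~\ref{Prop.bounds} and~\ref{Prop.monotonicity} to for the exterior analogue; you must cite or reprove it, not derive it from scaling. (ii) The hole-filling monotonicity $\lambda_1^\alpha(\Omega)\le\lambda_1^\alpha(\widetilde\Omega)$ is asserted only heuristically, and Robin problems are notorious for failing domain monotonicity. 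Here, however, an actual proof is easy and you should supply it: restrict the positive ground state $\tilde u$ of $\widetilde\Omega$ to $\Omega$ and note that, with $H$ the union of the filled holes, $Q_\alpha^{\Omega}[\tilde u|_\Omega]-\lambda_1^\alpha(\widetilde\Omega)\,\|\tilde u\|_{L^2(\Omega)}^2=\lambda_1^\alpha(\widetilde\Omega)\,\|\tilde u\|_{L^2(H)}^2-\int_H|\nabla\tilde u|^2+\alpha\int_{\partial H}|\tilde u|^2\le 0$, all three terms being non-positive because $\alpha<0$ and $\lambda_1^\alpha(\widetilde\Omega)<0$. With (i) cited correctly and (ii) made explicit, your proof closes.
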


Summing up, when~$\alpha$ is negative, the disk is still the optimiser
of the reverse spectral isoperimetric problem,
while it stops to be the optimiser of the isochoric problem 
for large negative values of~$\alpha$.
Whether the optimiser becomes the annulus for the larger negative values of~$\alpha$
and whether Conjecture~\ref{Conj} holds under some geometric restrictions on~$\Omega$
represent just a few hot open problems in the recent study
(see~\cite[Sec.~5.3]{AFK} for conjectures supported by numerical experiments).    

In this paper, we show that both the reverse spectral 
isochoric and isoperimetric inequalities hold in the dual setting
of the Robin problem in the \emph{exterior of a convex set}.
To this purpose, for any open set $\Omega \subset \Real^2$,
we define $\Omega^\mathrm{ext} := \Real^2 \setminus \overline{\Omega}$.
\begin{Theorem}\label{Thm}
For all negative~$\alpha$, we have
\begin{flalign}\label{results}
  \fbox{$\alpha < 0$} &&
  \max_{\stackrel[\Omega~\text{\rm convex}]{}{|\partial\Omega| = c_1}}
  \lambda_1^\alpha(\Omega^\mathrm{ext}) 
  = \lambda_1^\alpha(B_{R_1}^\mathrm{ext})
  \qquad \mbox{and} \qquad
  \max_{\stackrel[\Omega~\text{\rm convex}]{}{|\Omega| = c_2}}
  \lambda_1^\alpha(\Omega^\mathrm{ext}) 
  = \lambda_1^\alpha(B_{R_2}^\mathrm{ext})
  \,,
  &&
\end{flalign}
where the maxima are taken over all convex smooth bounded open sets~$\Omega$ 
of a given perimeter~$c_1 > 0$ or area~$c_2 > 0$, respectively,
and~$B_{R_1}$ and~$B_{R_2}$ are disks of perimeter $|\partial B_{R_1}| = c_1$
and area $|B_{R_2}| = c_2$.
\end{Theorem}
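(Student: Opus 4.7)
The plan is to bound $\lambda_1^\alpha(\Omega^{\mathrm{ext}})$ from above by $\lambda_1^\alpha(B_R^{\mathrm{ext}})$ variationally, choosing as a trial function the transplant of the radial disk ground state via parallel coordinates. Fix $R\in\{R_1,R_2\}$ according to the constraint, set $\lambda_0 := \lambda_1^\alpha(B_R^{\mathrm{ext}})$, and let $v : [R,\infty) \to (0,\infty)$ be the positive radial eigenfunction, solving $-v''-v'/r = \lambda_0 v$ with the Robin condition $v'(R) = \alpha v(R)$. Explicitly $v$ is proportional to the modified Bessel function $K_0(\sqrt{-\lambda_0}\,r)$, hence positive, strictly decreasing, and exponentially decaying; only the pointwise inequalities $v>0$ and $v'<0$ on $[R,\infty)$ will be used below.

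Set $t(x) := \mathrm{dist}(x,\Omega)$ and take
\begin{equation*}
  u(x) := v\bigl(R + t(x)\bigr), \qquad x \in \Omega^{\mathrm{ext}}.
\end{equation*}
The convexity of $\Omega$ enters through two parallel-coordinate identities: $|\nabla t| = 1$ almost everywhere on $\Omega^{\mathrm{ext}}$, and the exact planar Steiner relation $|\partial \Omega_\tau| = P + 2\pi\tau$ for the parallel set $\Omega_\tau := \{t\le\tau\}$, with $P := |\partial\Omega|$. Feeding these into the coarea formula turns the Rayleigh quotient of $u$ on $\Omega^{\mathrm{ext}}$ into a one-dimensional integral in $\tau$. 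Writing $P + 2\pi\tau = 2\pi(R+\tau) + \Delta$ with $\Delta := P - 2\pi R$ and extracting the radial Rayleigh quotient of $v$ on $B_R^{\mathrm{ext}}$ (which by construction equals $\lambda_0$), a direct computation gives
\begin{equation*}
  \frac{\int_{\Omega^{\mathrm{ext}}} |\nabla u|^2 + \alpha \int_{\partial\Omega} u^2}{\int_{\Omega^{\mathrm{ext}}} u^2}
  \;=\; \frac{\lambda_0\, D_0 + \Delta\bigl(I_1 + \alpha v(R)^2\bigr)}{D_0 + \Delta\, I_2},
\end{equation*}
where $D_0 := \int_{B_R^{\mathrm{ext}}} v^2$, $I_1 := \int_R^\infty (v')^2\,dr$, and $I_2 := \int_R^\infty v^2\,dr$.

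In the perimeter case $\Delta = 0$, so the quotient collapses to $\lambda_0$ and the variational principle finishes the proof at once. In the area case the classical planar isoperimetric inequality forces $\Delta = P - 2\pi R \ge 0$, and the desired bound reduces to the scalar inequality $I_1 + \alpha v(R)^2 - \lambda_0 I_2 \le 0$. A single integration by parts on $(R,\infty)$, using the Robin condition $v'(R) = \alpha v(R)$ and the radial equation to eliminate both boundary and bulk terms, rewrites this quantity as $\int_R^\infty v(r) v'(r)\, r^{-1}\,dr$, which is non-positive thanks to $v > 0$ and $v' < 0$. Combined with $\Delta \ge 0$ this gives $\lambda_1^\alpha(\Omega^{\mathrm{ext}}) \le \lambda_0 = \lambda_1^\alpha(B_R^{\mathrm{ext}})$ in both cases.

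The main obstacle is that the whole argument hinges on two identities that are specific to convex $\Omega$: $|\nabla t| = 1$ on all of $\Omega^{\mathrm{ext}}$, and the sharp Steiner relation $|\partial\Omega_\tau| = P + 2\pi\tau$ for every $\tau > 0$. Both genuinely fail outside the convex class, and replacing either by a mere inequality would destroy the clean splitting $P + 2\pi\tau = 2\pi(R+\tau) + \Delta$ on which the reduction to the radial problem rests. This is also the reason one does not expect a purely variational transplantation argument to extend beyond convex domains, consistently with the interior counter-examples for the reverse isochoric problem recalled in the introduction.
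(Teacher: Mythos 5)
Your proposal is correct, and it splits naturally into two parts relative to the paper. For the perimeter constraint you do essentially what the paper does: pass to parallel coordinates on the convex exterior (your $|\nabla t|=1$ plus the Steiner relation $|\partial\Omega_\tau|=P+2\pi\tau$ is exactly the paper's metric $(1-\kappa(s)t)\,\der s\,\der t$ together with $\int_\Sigma\kappa=-2\pi$), restrict to $s$-independent test functions, and observe that the resulting one-dimensional quotient depends only on $P$, so the radial disk eigenfunction saturates it. For the area constraint, however, your route is genuinely different and more self-contained. The paper argues by scaling: from the isoperimetric part and the geometric isoperimetric inequality it gets $R_2\le R_1$, and then invokes the monotonicity of $R\mapsto\lambda_1^\alpha(B_R^{\mathrm{ext}})$ (its Proposition~5), whose proof rests on a Hadamard-type derivative formula and the two-sided bounds $-\alpha^2<\lambda_1^\alpha(B_R^{\mathrm{ext}})<-\alpha^2-\alpha/R$ obtained from Segura's modified-Bessel-function inequalities. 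You instead transplant the eigenfunction of $B_{R_2}^{\mathrm{ext}}$ directly, isolate the excess perimeter $\Delta=P-2\pi R_2\ge 0$, and reduce everything to the scalar inequality $I_1+\alpha v(R)^2-\lambda_0 I_2\le 0$, which your integration by parts correctly identifies with $\int_R^\infty v v'\,r^{-1}\,\der r<0$ (using $v\propto K_0$, so $v>0$, $v'<0$, with the boundary term killed by the Robin condition and the decay at infinity). I checked the algebra: the Rayleigh quotient does collapse to $(\lambda_0 D_0+\Delta(I_1+\alpha v(R)^2))/(D_0+\Delta I_2)$, and since the denominator is positive the conclusion follows for either sign of $\lambda_0$. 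What your approach buys is independence from the Bessel-function estimates of the paper's Propositions~4--5; note also that applying your computation with $\Omega=B_{R'}$, $R'>R$, recovers the non-strict monotonicity of Proposition~5 for free. What the paper's approach buys is the monotonicity statement itself in quantitative form, which it reuses later for the uniqueness of the optimiser and the stability remark; if you wanted strict inequality for non-circular $\Omega$ you would still need an extra argument, as the paper supplies in its Section on uniqueness.
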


It is important to mention that $\lambda_1^\alpha(\Omega^\mathrm{ext})$ 
when defined by~\eqref{Rayleigh} indeed represents a (negative) \emph{discrete} eigenvalue
of a self-adjoint realisation of the Robin Laplacian in $\sii(\Omega^\mathrm{ext})$. 
It is not obvious because there is also the essential spectrum $[0,\infty)$,
but it can be shown by using the criticality of the Laplacian in~$\Real^2$
and the fact that~$\alpha$ is negative (\cf~Section~\ref{Sec.pre}).
In fact, $\lambda_1^\alpha(\Omega^\mathrm{ext})$ equals zero
(the lowest point in the essential spectrum) for any domain~$\Omega$
if~$\alpha$ is non-negative,
so the optimisation problems~\eqref{results} are trivial in this case. 
At the same time, because of the existence of the Hardy inequality in higher dimensions,
$\lambda_1^\alpha(\Omega^\mathrm{ext})$ is also zero 
for all small (negative) values of~$\alpha$ 
if the dimension is greater than or equal to three,
so the identities~\eqref{results} become trivial in this regime, too.
This is the main reason why 
we mostly (but not exclusively) restrict to planar domains in this paper.
As a matter of fact, in Section~\ref{Sec.high} we argue that 
an analogue of Theorem~\ref{Thm} can not hold in space
dimensions greater than or equal to three.

We point out that
the disks are the only optimisers in Theorem~\ref{Thm}
(see Section~\ref{Sec.uniqueness} for the respective argument).
It is worth mentioning that the identities~\eqref{results} 
are no longer valid if the condition of
connectedness of $\Omega$ is dropped 
(see Section~\ref{Sec.counterexample} for a counterexample). 
However, it is still unclear at the moment whether the convexity of $\Omega$ in~\eqref{results} can be replaced by a weaker assumption. 

The organisation of this paper is as follows.
In Section~\ref{Sec.pre} we provide an operator-theoretic framework for
the eigenvalue problem of Robin type~\eqref{problem}
and establish basic spectral properties in the exterior of a compact set.
Section~\ref{Sec.disk} is devoted to more specific results 
on the lowest eigenvalue in the case of the compact set being a disk.
Theorem~\ref{Thm} is proved in Section~\ref{Sec.proof}:
The isoperimetric part of the theorem follows quite straightforwardly
by using test functions with level lines parallel to the boundary~$\partial\Omega$,
while the isochoric result is established with help of scaling 
and a monotonicity result of Section~\ref{Sec.disk}.
The method of parallel coordinates was employed 
also in~\cite{FK7} to establish Conjecture~\ref{Conj}
for small values of~$\alpha$,
however, the reader will notice that
the present implementation of the technique is quite different 
and in principal does not restrict to planar sets (\cf~Section~\ref{Sec.three}).
The paper is concluded by Section~\ref{Sec.end} with more comments
on our results and methods.

\section{The spectral problem in the exterior of a compact set}\label{Sec.pre}
%
Throughout this section, $\Omega$~is an arbitrary bounded open set in~$\Real^2$,
not necessarily connected or convex.
However, a standing assumption is that the exterior~$\Omega^\mathrm{ext}$ is connected. 
Occasionally,
we adopt the shorthand notation $\Sigma := \partial\Omega$.
For simplicity, we assume that~$\Omega$ is 
smooth (\ie~$C^\infty$-smooth),
but less regularity is evidently needed for the majority of the results to hold.
At the same time, $\alpha$~is an arbitrary real number, 
not necessarily negative (unless otherwise stated).

We are interested in the eigenvalue problem
\begin{equation}\label{problem.ext}
\left\{
\begin{aligned}
  -\Delta u &= \lambda u 
  && \mbox{in} \quad \Omega^\mathrm{ext} \,,
  \\
  -\frac{\partial u}{\partial n}+\alpha \;\! u&=0 
  && \mbox{on} \quad \partial\Omega^\mathrm{ext} \,.
\end{aligned}
\right.
\end{equation}
We recall that~$n$ is the \emph{outer} unit normal to~$\Omega$,
that is why we have the flip of sign with respect to~\eqref{problem}.
As usual, we understand~\eqref{problem.ext} as the spectral problem for
the self-adjoint operator 
$-\Delta_\alpha^{\Omega^\mathrm{ext}}$ in $\sii(\Omega^\mathrm{ext})$
associated with the closed quadratic form
\begin{equation}\label{form}
  Q_\alpha^{\Omega^\mathrm{ext}}[u] := 
  \|\nabla u\|_{\sii(\Omega^\mathrm{ext})}^2  +\alpha\;\!\|u\|_{\sii(\Sigma)}^2
  \,, \qquad 
  \Dom(Q_\alpha^{\Omega^\mathrm{ext}}) := W^{1,2}(\Omega^\mathrm{ext})
  \,.
\end{equation}
The boundary term is understood in the sense of traces
$W^{1,2}(\Omega^\mathrm{ext}) \hookrightarrow \sii(\Sigma)$
and represents a relatively bounded perturbation of 
the Neumann form~$Q_0^{\Omega^\mathrm{ext}}$
with the relative bound equal to zero.
Since~$\Omega$ is smooth, 
the operator domain of $-\Delta_\alpha^{\Omega^\mathrm{ext}}$ 
consists of functions $u \in W^{2,2}(\Omega^\mathrm{ext})$
which satisfy the Robin boundary conditions 
of~\eqref{problem.ext} in the sense of traces
and the operator acts as the distributional Laplacian 
(\cf~\cite[Thm.~3.5]{BLLR15} for the $W^{2,2}$-regularity).
We call $-\Delta_\alpha^{\Omega^\mathrm{ext}}$
the \emph{Robin Laplacian} in~$\Omega^\mathrm{ext}$.

Since~$\Omega$ is bounded, 
the embedding $W^{1,2}(\Omega^\mathrm{ext}) \hookrightarrow \sii(\Omega^\mathrm{ext})$
is \emph{not} compact.
In fact, the Robin Laplacian possesses a non-empty essential spectrum
which equals $[0,\infty)$.
This property is expected because the (essential) spectrum of the Laplacian
in the whole space~$\Real^2$ (\ie~$\Omega=\varnothing$) equals $[0,\infty)$ 
and removing~$\Omega$ can be understood as a compact perturbation. 
In order to keep the paper self-contained, 
we provide a proof of this claim which relies
on an explicit construction of singular sequences 
and a Neumann bracketing argument.
\begin{Proposition}\label{Prop.ess}
We have
$
  \sigma_\mathrm{ess}(-\Delta_\alpha^{\Omega^\mathrm{ext}})
  = [0,\infty)
  \,.
$
\end{Proposition}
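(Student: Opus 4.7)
The plan is to prove the two inclusions $[0,\infty) \subseteq \sigma_{\mathrm{ess}}(-\Delta_\alpha^{\Omega^{\mathrm{ext}}})$ and $\sigma_{\mathrm{ess}}(-\Delta_\alpha^{\Omega^{\mathrm{ext}}}) \subseteq [0,\infty)$ separately, the first by constructing Weyl singular sequences and the second by a Neumann bracketing, as the excerpt suggests. For the first inclusion, fix $\lambda \geq 0$ and choose $k \in \Real^2$ with $|k|^2 = \lambda$. Pick a sequence of cut-off functions $\chi_n \in C_c^\infty(\Real^2)$ supported in dyadic annuli $B_{2R_n} \setminus B_{R_n}$ with $R_n \to \infty$, equal to $1$ on a large subannulus, and satisfying $|\nabla\chi_n| \leq C/R_n$ and $|\Delta\chi_n| \leq C/R_n^2$. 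Since $\Omega$ is bounded, for $n$ large the support of $\chi_n$ avoids $\overline{\Omega}$ entirely, so the normalised test functions $u_n := \chi_n\, e^{ik\cdot x}/\|\chi_n e^{ik\cdot x}\|_{\sii}$ lie in $\Dom(-\Delta_\alpha^{\Omega^{\mathrm{ext}}})$ trivially (their traces on $\Sigma$ vanish). A direct computation yields
\[
  (-\Delta - \lambda)u_n = -\frac{(\Delta\chi_n + 2i k\cdot\nabla\chi_n)\, e^{ik\cdot x}}{\|\chi_n e^{ik\cdot x}\|_{\sii}},
\]
whose $\sii$-norm is $O(R_n^{-1})$ because the numerator is pointwise of order $R_n^{-1}$ on a support of area $O(R_n^2)$, while the denominator is of order $R_n$. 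Since $\supp u_n$ escapes every compact set one also has $u_n \rightharpoonup 0$ weakly in $\sii(\Omega^{\mathrm{ext}})$, so $(u_n)$ is a singular sequence and $\lambda \in \sigma_{\mathrm{ess}}(-\Delta_\alpha^{\Omega^{\mathrm{ext}}})$.

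For the reverse inclusion I would apply Neumann bracketing. Choose $R > 0$ so large that $\overline{\Omega} \subset B_R$ and split $\Omega^{\mathrm{ext}}$ along the artificial circle $\partial B_R$ into the bounded region $\Omega_R := \Omega^{\mathrm{ext}} \cap B_R$ and the exterior region $A_R := \Real^2 \setminus \overline{B_R}$. Let $H_N$ denote the orthogonal sum of the Robin Laplacian on $\Omega_R$ (Robin boundary conditions on $\Sigma$, Neumann on $\partial B_R$) and the Neumann Laplacian on $A_R$. The form domain of $H_N$ is $W^{1,2}(\Omega_R) \oplus W^{1,2}(A_R)$, which strictly contains $W^{1,2}(\Omega^{\mathrm{ext}})$, and the two quadratic forms coincide on the smaller space; hence $H_N \leq -\Delta_\alpha^{\Omega^{\mathrm{ext}}}$ in the form sense. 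By Glazman's min-max characterisation of $\inf \sigma_{\mathrm{ess}}$, this yields $\inf \sigma_{\mathrm{ess}}(-\Delta_\alpha^{\Omega^{\mathrm{ext}}}) \geq \inf \sigma_{\mathrm{ess}}(H_N)$. Now the first summand of $H_N$ has compact resolvent (bounded smooth domain), so contributes no essential spectrum, while the Neumann Laplacian on the two-dimensional exterior $A_R$ has spectrum $[0,\infty)$ (verifiable by restricting the Weyl sequences of step one to $A_R$). Consequently $\sigma_{\mathrm{ess}}(H_N) = [0,\infty)$ and $\inf \sigma_{\mathrm{ess}}(-\Delta_\alpha^{\Omega^{\mathrm{ext}}}) \geq 0$, as required.

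The argument is largely routine; the main point demanding some care is the Weyl-sequence estimate of step one, where one must track both the $\sii$-normalisation and the annulus-area contribution to conclude that $\|(-\Delta - \lambda) u_n\|_{\sii} \to 0$. The form comparison of step two then requires no additional analytic input beyond the explicit description of the form domains, and crucially bypasses any discussion of compactness of resolvent differences.
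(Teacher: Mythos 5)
Your proposal is correct and follows essentially the same route as the paper: a Weyl singular sequence of plane waves with cut-offs escaping to infinity for the inclusion $[0,\infty)\subseteq\sigma_{\mathrm{ess}}$, and Neumann bracketing along a large circle, with the bounded piece having purely discrete spectrum and the unbounded piece being non-negative, for the reverse inclusion. The only (cosmetic) difference is that you localise the Weyl sequence in dyadic annuli whereas the paper uses translated and dilated bumps $\varphi_n(x)=n^{-1}\varphi((x-nx_0)/n)$; both give the required $o(1)$ estimate after normalisation.
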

\begin{proof}
First, we show the inclusion  
$
  \sigma_\mathrm{ess}(-\Delta_\alpha^{\Omega^\mathrm{ext}})
  \supset [0,\infty)
$
by constructing a suitable singular sequence.
For any positive integer~$n$, let us set
$
  u_n(x) := \varphi_n(x) \, e^{ik \cdot x}
$
with an arbitrary vector $k \in \Real^2$ and
$
  \varphi_n(x) := n^{-1} \varphi((x-n x_0)/n)
$,
where~$\varphi$ is a function from $C_0^\infty(\Real^2)$
normalised to~$1$ in $\sii(\Real^2)$ and $x_0:=(1,0)$. 
The prefactor in the definition of~$\varphi_n$ is chosen in such a way
that~$\varphi_n$ is normalised to~$1$ in $\sii(\Real^2)$ for each~$n$.
In fact, we have
\begin{equation}\label{varphi.singular}
  \|\varphi_n\|_{\sii(\Real^2)}=1 
  \,, \quad
  \|\nabla\varphi_n\|_{\sii(\Real^2)}= n^{-1} \|\nabla\varphi\|_{\sii(\Real^2)} 
  \,, \quad
  \|\Delta\varphi_n\|_{\sii(\Real^2)}= n^{-2} \|\Delta\varphi\|_{\sii(\Real^2)}
  \,.
\end{equation}
At the same time, the support of~$\varphi_n$ leaves 
any bounded set for all sufficiently large~$n$. 
Consequently, for all sufficiently large~$n$, we have
$
  u_n \in C_0^\infty(\Omega^\mathrm{ext}) 
  \subset \Dom(-\Delta_\alpha^{\Omega^\mathrm{ext}})
$
and $\|u_n\|_{\sii(\Omega^\mathrm{ext})} = 1$.
A direct computation yields
\begin{equation*}
  \left|-\Delta_\alpha^{\Omega^\mathrm{ext}}u_n-|k|^2 u_n\right|
  = \left|(-\Delta\varphi_n+2i k\cdot\nabla\varphi_n)
  \,e^{ik \cdot x}\right|
  \leq |\Delta\varphi_n| + 2\,|k| |\nabla\varphi_n|
  \,.
\end{equation*}
Using~\eqref{varphi.singular}, we therefore have
$$
  \big\|
  -\Delta_\alpha^{\Omega^\mathrm{ext}} u_n-|k|^2 u_n
  \big\|_{\sii(\Omega^\mathrm{ext})}^2
  \leq 2 \|\Delta\varphi_n\|_{\sii(\Real^2)}^2 
  + 8\,|k|^2 \|\nabla\varphi_n\|_{\sii(\Real^2)}^2
  \xrightarrow[n\to\infty]{} 0
  \,.
$$
Since~$k$ is arbitrary, we conclude that 
$[0,\infty) \subset \sigma(-\Delta_\alpha^{\Omega^\mathrm{ext}})$
by \cite[Thm.~7.22]{Weidmann}.
It is clear that $[0,\infty)$ 
actually belongs to the \emph{essential} spectrum,
because the interval has no isolated points.

Second, to show the opposite inclusion 
$
  \sigma_\mathrm{ess}(-\Delta_\alpha^{\Omega^\mathrm{ext}})
  \subset [0,\infty)
$,
we use a Neumann bracketing argument. 
Let~$H_n$ be the operator that acts as $-\Delta_\alpha^{\Omega^\mathrm{ext}}$
but satisfies an extra Neumann condition on the circle 
$\mathcal{C}_n := \{x\in\Real^2:|x|=n\}$
of radius~$n>0$.
More specifically, $H_n$~is the operator associated with the form 
\begin{equation*}
  h_n[u] := \|\nabla u\|_{\sii(\Omega^\mathrm{ext})}^2 
  +\alpha\;\!\|u\|_{\sii(\Sigma)}^2
  \,, \qquad 
  \Dom(h_n) 
  := W^{1,2}(\Omega^\mathrm{ext} \setminus \mathcal{C}_n) 
  \,.
\end{equation*}
Because of the domain inclusion 
$\Dom(h_n) \supset \Dom(Q_\alpha^{\Omega^\mathrm{ext}})$,
we have $-\Delta_\alpha^{\Omega^\mathrm{ext}} \geq H_n$
and, by the min-max principle,
$
  \inf\sigma_\mathrm{ess}(-\Delta_\alpha^{\Omega^\mathrm{ext}})
  \geq \inf\sigma_\mathrm{ess}(H_n)
$
for all~$n$.
Assuming that~$n$ is sufficiently large
so that~$\Omega$ is contained in the disk $B_n := \{x\in\Real^2:|x|<n\}$, 
$H_n$~decouples into an orthogonal
sum of two operators, $H_n = H_n^{(1)} \oplus H_n^{(2)}$
with respect to the decomposition $L^2(\Omega^{\rm ext}) = 
L^2(\Omega^{\rm ext}\cap B_n) \oplus L^2(\Omega^{\rm ext}\setminus\overline{B_n})$.
Here $H_n^{(1)}$ and $H_n^{(2)}$ are respectively 
the operators in $\sii(\Omega^\mathrm{ext} \cap B_n)$
and $\sii(\Omega^\mathrm{ext} \setminus \overline{B_n})$
associated with the forms
\begin{align*}
  h_n^{(1)}[u] 
  &:= \|\nabla u\|_{\sii(\Omega^\mathrm{ext} \cap B_n)}^2 
  +\alpha\;\!\|u\|_{\sii(\Sigma)}^2
  \,, \qquad 
  &
  \Dom(h_n^{(1)}) 
  &:= W^{1,2}(\Omega^\mathrm{ext} \cap B_n) 
  \,,
  \\
  h_n^{(2)}[u] 
  &:= \|\nabla u\|_{\sii(\Omega^\mathrm{ext} \setminus \overline{B_n})}^2 
  \,, \qquad 
  &
  \Dom(h_n^{(2)}) 
  &:= W^{1,2}(\Omega^\mathrm{ext} \setminus \overline{B_n}) 
  \,.
\end{align*}
Since $\Omega^\mathrm{ext} \cap B_n$ is a smooth bounded open set,
the spectrum of $H_n^{(1)}$ is purely discrete.
Consequently,
$
  \inf\sigma_\mathrm{ess}(-\Delta_\alpha^{\Omega^\mathrm{ext}})
  \geq \inf\sigma_\mathrm{ess}(H_n^{(2)})
  \geq \inf\sigma(H_n^{(2)})
  \geq 0
$,
where the last inequality follows by the fact that~$H_n^{(2)}$ is non-negative. 
\end{proof} 

Despite of the presence of essential spectrum, 
it still makes sense to define the lowest point in the spectrum  
of $-\Delta_\alpha^{\Omega^\mathrm{ext}}$ 
by the variational formula (\cf~\eqref{Rayleigh})
\begin{equation}\label{Rayleigh.ext}
  \lambda_1^\alpha(\Omega^\mathrm{ext})
  := \inf_{\stackrel[u\not=0]{}{u \in W^{1,2}(\Omega^\mathrm{ext})}}
  \frac{\displaystyle Q_\alpha^{\Omega^\mathrm{ext}}[u]}
  {\displaystyle \, \|u\|_{\sii(\Omega^\mathrm{ext})}^2}
  \,.
\end{equation}
However, it is not evident that it represents a discrete eigenvalue 
of $-\Delta_\alpha^{\Omega^\mathrm{ext}}$.
Obviously, it is not the case if~$\alpha$ is non-negative,
in which case $-\Delta_\alpha^{\Omega^\mathrm{ext}}$ is non-negative
and therefore its spectrum is purely essential.
The following result shows that the situation of negative~$\alpha$ 
is different.
\begin{Proposition}\label{Prop.disc}
If $\alpha < 0$ and~$\Omega$ is not empty, then
$
  \sigma_\mathrm{disc}(-\Delta_\alpha^{\Omega^\mathrm{ext}})
  \not= \varnothing
  \,.
$
More specifically, $\lambda_1^\alpha(\Omega^\mathrm{ext})$ 
is a negative discrete eigenvalue.
\end{Proposition}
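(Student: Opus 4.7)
The plan is to show $\lambda_1^\alpha(\Omega^{\mathrm{ext}})<0$; once this is established, the min-max principle combined with $\inf\sigma_{\mathrm{ess}}(-\Delta_\alpha^{\Omega^{\mathrm{ext}}})=0$ from Proposition~\ref{Prop.ess} forces the infimum in~\eqref{Rayleigh.ext} to be attained by an eigenfunction, hence to be a negative discrete eigenvalue. So the heart of the matter is producing a test function $u\in W^{1,2}(\Omega^{\mathrm{ext}})$ with $Q_\alpha^{\Omega^{\mathrm{ext}}}[u]<0$.

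The key idea exploits the two-dimensional criticality of the Laplacian, namely the failure of the Hardy inequality in $\Real^2$, which makes the gradient term cheap while the negative boundary term is fixed. Concretely, fix $r_0>0$ large enough that $\overline{\Omega}\subset B_{r_0}$, and for $R>r_0$ define the radial logarithmic cut-off
\begin{equation*}
 u_R(x) :=
 \begin{cases}
  1 & \text{if } x\in\Omega^{\mathrm{ext}},\ |x|\le r_0,\\[2pt]
  \dfrac{\log(R/|x|)}{\log(R/r_0)} & \text{if } r_0\le|x|\le R,\\[6pt]
  0 & \text{if } |x|\ge R.
 \end{cases}
\end{equation*}
Then $u_R\in W^{1,2}(\Omega^{\mathrm{ext}})$, its trace on $\Sigma=\partial\Omega$ is identically $1$, and a direct computation in polar coordinates gives the decay
\begin{equation*}
 \|\nabla u_R\|_{\sii(\Omega^{\mathrm{ext}})}^{2}=\frac{2\pi}{\log(R/r_0)}\xrightarrow[R\to\infty]{}0,
\end{equation*}
while $\|u_R\|_{\sii(\Sigma)}^{2}=|\Sigma|>0$ remains constant (this uses that $\Omega$ is non-empty and smooth, so $|\Sigma|>0$). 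Consequently
\begin{equation*}
 Q_\alpha^{\Omega^{\mathrm{ext}}}[u_R]=\frac{2\pi}{\log(R/r_0)}+\alpha\,|\Sigma|\xrightarrow[R\to\infty]{}\alpha\,|\Sigma|<0,
\end{equation*}
so for $R$ sufficiently large the numerator of the Rayleigh quotient~\eqref{Rayleigh.ext} is strictly negative, whence $\lambda_1^\alpha(\Omega^{\mathrm{ext}})<0$.

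The only subtle point is the implication from $\lambda_1^\alpha(\Omega^{\mathrm{ext}})<0=\inf\sigma_{\mathrm{ess}}(-\Delta_\alpha^{\Omega^{\mathrm{ext}}})$ to membership in the discrete spectrum: by the variational characterisation of $\inf\sigma(-\Delta_\alpha^{\Omega^{\mathrm{ext}}})$ together with Proposition~\ref{Prop.ess}, any point of $\sigma(-\Delta_\alpha^{\Omega^{\mathrm{ext}}})$ lying strictly below $0$ is isolated and of finite multiplicity, i.e.\ belongs to $\sigma_{\mathrm{disc}}(-\Delta_\alpha^{\Omega^{\mathrm{ext}}})$, and $\lambda_1^\alpha(\Omega^{\mathrm{ext}})$ is precisely the smallest such eigenvalue. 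The main obstacle is thus really a dimensional one: the test function $u_R$ works only because $\int_{r_0\le|x|\le R}|x|^{-2}\,dx$ grows logarithmically in two dimensions, which allows the Dirichlet energy of a function with unit trace to be driven to zero; this is exactly the mechanism that fails in dimension $\ge 3$ (\cf~Section~\ref{Sec.high}), explaining the remark in the introduction that $\lambda_1^\alpha(\Omega^{\mathrm{ext}})=0$ for small $|\alpha|$ in higher dimensions.
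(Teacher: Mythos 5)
Your proposal is correct and follows essentially the same route as the paper: a radial logarithmic cut-off test function whose Dirichlet energy $2\pi/\log(\cdot)$ tends to zero while the boundary trace stays equal to $1$, combined with Proposition~\ref{Prop.ess} and the variational principle to conclude that the negative infimum is a discrete eigenvalue. The only cosmetic difference is that the paper lets both radii of the transition annulus grow ($n<r<n^2$) whereas you fix the inner radius $r_0$, which changes nothing in the argument.
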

\begin{proof}
By Proposition~\ref{Prop.ess} and~\eqref{Rayleigh.ext},
it is enough to find a test function $u \in W^{1,2}(\Omega^\mathrm{ext})$ 
such that $Q_\alpha^{\Omega^\mathrm{ext}}[u]$ is negative.
For any positive number~$n$, 
we introduce a function $u_n \colon \Real^2 \to [0,1]$
by setting $u_n(x) := \varphi_n(|x|)$ with
$$
  \varphi_n(r) :=
  \begin{cases}
    1
    & \mbox{if} \quad r < n \,,
    \\
    \displaystyle
    \frac{\log n^2 - \log r}{\log n^2 - \log n}
    & \mbox{if} \quad  n < r < n^2 ,
    \\
    0
    & \mbox{otherwise} \,.
  \end{cases}
$$
It is not difficult to check that the restriction of~$u_n$ to~$\Omega^\mathrm{ext}$
(that we shall denote by the same symbol)
belongs to $W^{1,2}(\Omega^\mathrm{ext})$ for every~$n$.
By employing polar coordinates, we have 
$$
  \|\nabla u_n\|_{\sii(\Omega^\mathrm{ext})}^2 
  \leq \|\nabla u_n\|_{\sii(\Real^2)}^2
  = 2\pi \int_0^\infty 
  |\varphi_n'(r)|^2 \, r \, \der r 
  =
  2\pi \int_n^{n^2} 
  \frac{1}{(\log n)^2\,r} \, \der r
  =  \frac{2\pi}{\log n}
  \xrightarrow[n\to\infty]{} 0
  \,.
$$
On the other hand, 
$
  \|u_n\|_{\sii(\Sigma)}^2 
  = |\Sigma| 
  > 0
$
for all sufficiently large~$n$.
Since~$\alpha$ is assumed to be negative, 
it follows that $Q_\alpha^{\Omega^\mathrm{ext}}[u_n]$
can be made negative for~$n$ large enough.
\end{proof} 

Summing up, if~$\alpha$ is negative, the essential spectrum of
$-\Delta_\alpha^{\Omega^\mathrm{ext}}$ equals the interval $[0,\infty)$
and there is at least one discrete eigenvalue below~$0$.
In particular, the lowest point $\lambda_1^\alpha(\Omega^\mathrm{ext})$ 
in the spectrum is always a negative discrete eigenvalue. 
By standard methods (see, \eg, \cite[Thm.~8.38]{Gilbarg-Trudinger}),
it is possible to show that~$\lambda_1^\alpha(\Omega^\mathrm{ext})$ is simple
and that the corresponding eigenfunction~$u_1^\alpha$ can be chosen 
to be positive in~$\Omega^\mathrm{ext}$
(recall that we always assume that~$\Omega^\mathrm{ext}$ is connected).

It is straightforward to verify that 
$\{Q_\alpha^{\Omega^\mathrm{ext}}\}_{\alpha\in\Real}$
is a holomorphic family of forms of type~(a) in the sense of Kato~\cite[Sec.~VII.4]{Kato}. 
In fact, recalling that the boundary term in~\eqref{form} 
is relatively bounded with respect to the Neumann form~$Q_0^{\Omega^\mathrm{ext}}$
with the relative bound equal to zero,
one can use~\cite[Thm.~4.8]{Kato} to get the claim.
Consequently, $-\Delta_\alpha^{\Omega^\mathrm{ext}}$ forms 
a self-adjoint holomorphic family of operators of type~(B).
Because of the simplicity, it follows that 
$\alpha \mapsto \lambda_1^\alpha(\Omega^\mathrm{ext})$ 
and $\alpha \mapsto u_1^\alpha$ with $\|u_1^\alpha\| =1$ 
are real-analytic functions on~$(-\infty,0)$. 
\begin{Proposition}\label{Prop.convex}
	Let $\alpha < 0$ and $\Omega \not=\varnothing$.
	Then $\alpha \mapsto \lambda_1^\alpha(\Omega^\mathrm{ext})$
	is a strictly concave increasing function.
\end{Proposition}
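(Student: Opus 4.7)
The variational formula~\eqref{Rayleigh.ext} writes $\lambda_1^\alpha(\Omega^\mathrm{ext})$ as a pointwise infimum of maps
\[
 \alpha \mapsto \frac{\|\nabla u\|^2_{\sii(\Omega^\mathrm{ext})} + \alpha \|u\|^2_{\sii(\Sigma)}}{\|u\|^2_{\sii(\Omega^\mathrm{ext})}},
\]
each of which is affine in $\alpha$ with non-negative slope. Consequently, $\alpha \mapsto \lambda_1^\alpha(\Omega^\mathrm{ext})$ is automatically concave and non-decreasing, and only the \emph{strict} versions of these properties require work.

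The key ingredient is that the positive ground state $u_1^\alpha$ satisfies $\|u_1^\alpha\|_{\sii(\Sigma)} > 0$ for every $\alpha<0$. Indeed, $u_1^\alpha \in W^{2,2}(\Omega^\mathrm{ext})$ is continuous up to $\Sigma$ and strictly positive on the connected set $\Omega^\mathrm{ext}$. If it vanished at some $x_0 \in \Sigma$, Hopf's boundary-point lemma, applied locally where $\Omega^\mathrm{ext}$ is a smooth (one-sided) domain, would yield a strictly positive derivative of $u_1^\alpha$ at $x_0$ along the inward normal of $\Omega^\mathrm{ext}$; since this direction is exactly $n$ (the outer normal to $\Omega$), this reads $\partial_n u_1^\alpha(x_0) > 0$. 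However, the Robin condition $-\partial_n u_1^\alpha + \alpha u_1^\alpha = 0$ combined with $u_1^\alpha(x_0)=0$ forces $\partial_n u_1^\alpha(x_0) = 0$, a contradiction.

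Strict monotonicity then follows by testing~\eqref{Rayleigh.ext} at the parameter $\alpha$ with the normalised minimiser $u := u_1^{\alpha'}$ for $\alpha<\alpha'<0$:
\[
 \lambda_1^\alpha(\Omega^\mathrm{ext}) \leq Q_\alpha^{\Omega^\mathrm{ext}}[u] = \lambda_1^{\alpha'}(\Omega^\mathrm{ext}) - (\alpha'-\alpha)\|u\|^2_{\sii(\Sigma)} < \lambda_1^{\alpha'}(\Omega^\mathrm{ext}).
\]
For strict concavity, fix $\alpha_1<\alpha_2<0$ and $t \in (0,1)$, set $\alpha_t := t\alpha_1 + (1-t)\alpha_2$, and test with the normalised minimiser $u_t := u_1^{\alpha_t}$ to obtain
\[
 \lambda_1^{\alpha_t}(\Omega^\mathrm{ext}) = Q_{\alpha_t}^{\Omega^\mathrm{ext}}[u_t] = t\;\!Q_{\alpha_1}^{\Omega^\mathrm{ext}}[u_t] + (1-t)\;\!Q_{\alpha_2}^{\Omega^\mathrm{ext}}[u_t] \geq t\;\!\lambda_1^{\alpha_1}(\Omega^\mathrm{ext}) + (1-t)\;\!\lambda_1^{\alpha_2}(\Omega^\mathrm{ext}).
\]
Equality would force $Q_{\alpha_i}^{\Omega^\mathrm{ext}}[u_t] = \lambda_1^{\alpha_i}(\Omega^\mathrm{ext})$ for both $i=1,2$, so $u_t$ would be a ground-state eigenfunction satisfying the Robin boundary condition with parameter $\alpha_1$ in addition to $\alpha_t$; subtracting the two conditions yields $(\alpha_t-\alpha_1)u_t = 0$ on $\Sigma$, contradicting the key ingredient above.

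The main delicate point I anticipate is the invocation of Hopf's lemma in the exterior setting, but since $\Omega^\mathrm{ext}$ is locally a smooth interior domain near any boundary point and $u_1^\alpha$ has bounded Laplacian there, the standard local formulation applies without modification.
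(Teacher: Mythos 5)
Your proof is correct, but it follows a genuinely different route from the paper. The paper treats $\{Q_\alpha^{\Omega^\mathrm{ext}}\}_\alpha$ as a holomorphic family of type~(a), invokes real-analyticity of $\alpha\mapsto\lambda_1^\alpha$ and $\alpha\mapsto u_1^\alpha$, and differentiates the weak eigenvalue equation twice: this yields the Feynman--Hellmann formula $\dot{\lambda}_1^\alpha=\|u_1^\alpha\|^2_{\sii(\Sigma)}>0$ and then $\ddot{\lambda}_1^\alpha=2\lambda_1^\alpha\|\dot{u}_1^\alpha\|^2-2Q_\alpha^{\Omega^\mathrm{ext}}[\dot{u}_1^\alpha]<0$, with strictness in both cases ruled out by showing that equality would force $u_1^\alpha$ to be a Dirichlet eigenfunction with a negative eigenvalue (respectively, would contradict simplicity). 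You instead observe that $\lambda_1^\alpha$ is an infimum of functions affine and non-decreasing in $\alpha$, which gives concavity and monotonicity for free, and you obtain strictness by testing the Rayleigh quotient at one parameter with the normalised ground state of another; the only substantive input is $\|u_1^\alpha\|_{\sii(\Sigma)}>0$. Your argument is more elementary --- it avoids Kato's perturbation theory and the regularity needed to justify $\dot{u}_1^\alpha\in W^{1,2}(\Omega^\mathrm{ext})$ --- while the paper's approach buys the explicit derivative formulas, the first of which is reused in \eqref{eq:lambda_R_derivative}. Two minor remarks. First, your Hopf-lemma argument for the positivity of the boundary trace is valid (the operator $\Delta+\lambda_1^\alpha$ has non-positive zeroth-order coefficient and the putative boundary minimum value is $0$, so the boundary-point lemma applies, and elliptic regularity up to the smooth boundary justifies the pointwise normal derivative), but it is heavier than necessary: as in the paper, $u_1^\alpha|_\Sigma=0$ in $\sii(\Sigma)$ would already make $Q_\alpha^{\Omega^\mathrm{ext}}[u_1^\alpha]=\|\nabla u_1^\alpha\|^2\ge 0$, contradicting $\lambda_1^\alpha<0$. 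Second, in the equality case of your concavity step you should note explicitly that a minimiser of the Rayleigh quotient at parameter $\alpha_1$ lies in the operator domain of $-\Delta_{\alpha_1}^{\Omega^\mathrm{ext}}$ and hence satisfies the Robin condition with parameter $\alpha_1$ in the trace sense (this uses that $\lambda_1^{\alpha_1}<0$ lies below the essential spectrum, so the attained infimum is a genuine discrete eigenvalue); with that said, subtracting the two boundary conditions indeed forces the trace to vanish and yields the desired contradiction.
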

\begin{proof}
	For simplicity, let us set 
	$\lambda_1^\alpha := \lambda_1^\alpha(\Omega^\mathrm{ext})$.
	The eigenvalue equation 
	$-\Delta_\alpha^{\Omega^\mathrm{ext}} u_1^\alpha = 
	\lambda_1^\alpha \, u_1^\alpha$
	means that 
	\begin{equation}\label{eq:der0}
  	\forall \varphi \in W^{1,2}(\Omega^\mathrm{ext})
	  \,, \qquad
  	Q_\alpha^{\Omega^\mathrm{ext}}(\varphi,u_1^\alpha) 
	  = \lambda_1^\alpha \, (\varphi,u_1^\alpha)_{\sii(\Omega^\mathrm{ext})}
  \,.
	\end{equation}
	Differentiating the identity~\eqref{eq:der0} with respect to~$\alpha$,
	we easily arrive at the formula
	\begin{equation}\label{eq:der1}
		(\nabla \varphi,\nabla \dot{u}^\alpha_1)_{L^2(\Omega^{\rm ext})}
		+ 
		(\varphi, u^\alpha_1)_{L^2(\Sigma)}
		+ 
		\alpha(\varphi,	\dot{u}^\alpha_1)_{L^2(\Sigma)}
		=
		\dot{\lambda}_1^\alpha(\varphi,u_1^\alpha)_{L^2(\Omega^{\rm ext})}
		+
		\lambda_1^\alpha(\varphi,\dot{u}_1^\alpha)_{L^2(\Omega^{\rm ext})}
                \,,
	\end{equation}
where the dot denotes the derivative with respect to~$\alpha$.
Notice that the differentiation below the integral signs is permitted
because $\dot{u}_1^\alpha \in W^{1,2}(\Omega^\mathrm{ext})$
by standard elliptic regularity theory. 
Moreover, differentiating the normalisation condition $\|u_1^\alpha\| =1$,
we get the orthogonality property
\begin{equation}\label{orthogonal}
  (u_1^\alpha,\dot{u}_1^\alpha)_{L^2(\Omega^\mathrm{ext})}= 0
  \,.
\end{equation}

	Now, substituting $\varphi = u^\alpha_1$ into~\eqref{eq:der1} and 
	$\varphi = \dot{u}_1^\alpha$ 
	into~\eqref{eq:der0} and taking the difference of the resulting
	equations, we get a formula for the eigenvalue derivative
	\begin{equation}\label{eq:lm1_der}
  		\dot{\lambda}_1^\alpha  
		= 
		\|u_1^\alpha\|^2_{\sii(\Sigma)} > 0.
	\end{equation}
	The above inequality is strict because otherwise~$u_1^\alpha$ would be
	an eigenfunction of the Dirichlet Laplacian on $\Omega^\mathrm{ext}$
	corresponding to a negative eigenvalue $\lambda_1^\alpha$, 
    which is a contradiction to the non-negativity of the latter operator.
	This proves that $\alpha \mapsto \lambda_1^\alpha$
	is strictly increasing.

	Next, 
	we differentiate equation~\eqref{eq:lm1_der} with respect to $\alpha$,
	\begin{equation}\label{2nd}
		  	\ddot{\lambda}_1^\alpha 
		  	= 
		  	\frac{\dd}{\dd\alpha}\left(\|u_1^\alpha\|_{\sii(\Sigma)}^2\right)
	  		 = 2(u_1^\alpha,\dot{u}_1^\alpha)_{\sii(\Sigma)}\\
	  		= 
	  		2\lambda_1^\alpha\|\dot{u}_1^\alpha\|^2_{\sii(\Omega^\mathrm{ext})}
	  		 -2Q^{\Omega^\mathrm{ext}}_\alpha[\dot{u}_1^\alpha] < 0 \,.
	\end{equation}	
	Here the last equality employs~\eqref{eq:der1} 
with the choice $\varphi = \dot{u}_1^\alpha$ and~\eqref{orthogonal}.
The inequality follows from the fact that $\lm_1^\alpha$
is the lowest eigenvalue of $-\Delta^{\Omega^\mathrm{ext}}_\alpha$.		
	The above inequality is indeed strict 
	since otherwise $\dot{u}_1^\alpha$ would be either another eigenfunction 
	of~$-\Delta_\alpha^{\Omega^\mathrm{ext}}$ corresponding to~$\lambda_1^\alpha$,
	which is impossible because of the simplicity, 
	or a constant multiple of~$u_1^\alpha$,
	which would imply $\dot{u}_1^\alpha = 0$
	due to~\eqref{orthogonal}.
In the latter case~\eqref{eq:der1} 		
	gives
	\[
	  	\forall \varphi \in C^\infty_0(\Omega^\mathrm{ext})
	  \,, \qquad
	  	(\varphi,u_1^\alpha)_{L^2(\Omega^{\rm ext})} =0 \,,
	\]
	and therefore $u_1^\alpha = 0$, which is also a contradiction.
	From~\eqref{2nd} we therefore conclude that
$\alpha \mapsto \lambda_1^\alpha$ is strictly concave.
\end{proof} 

As a consequence of~Proposition~\ref{Prop.convex}, we get
\begin{equation} 
  \lim_{\alpha \to -\infty} \lambda_1^\alpha(\Omega^\mathrm{ext})
  = -\infty 
  \,.
\end{equation}
%

\section{The lowest eigenvalue in the exterior of a disk}\label{Sec.disk}
%
In this section, we establish some properties of 
$\lambda_1^\alpha(B_R^\mathrm{ext})$,
where~$B_R$ is an open disk of radius $R>0$. 
Without loss of generality, we can assume that~$B_R$
is centred at the origin of~$\Real^2$.
We always assume that~$\alpha$ is negative.

Using the rotational symmetry, 
it is easily seen that $\lambda_1^\alpha(B_R^\mathrm{ext})=-k^2 < 0$
is the smallest solution of the ordinary differential spectral problem
\begin{equation}\label{ODE}
\left\{
\begin{aligned}  
  -r^{-1} [r \psi'(r)]' &= \lambda \psi(r) \,,
  && r \in (R,\infty) \,,
  \\
  -\psi'(R)+\alpha\,\psi(R) &= 0 \,,
  \\
  \lim_{r \to \infty} \psi(r) &= 0 \,.
\end{aligned}
\right.
\end{equation}
The general solution of the differential equation in~\eqref{ODE}
is given by
\begin{equation}\label{Bessel.torus}
  \psi(r) = 
  C_1 K_{0}(kr) + C_2 I_{0}(kr)
  \,, \qquad
  C_1, C_2 \in \Com
  \,,
\end{equation}
where $K_0, I_0$ are modified Bessel functions 
\cite[Sec.~9.6]{Abramowitz-Stegun}. 
The solution~$I_{0}(kr)$ is excluded because it diverges as $r\to\infty$,
whence $C_2=0$.
Requiring $\psi$ to satisfy the Robin boundary condition at~$R$
leads us to the implicit equation 
\begin{equation}\label{implicit}
  k K_1(k R) + \alpha K_0(k R) = 0 
\end{equation}
that~$k$ must satisfy as a function of~$\alpha$ and~$R$.   

First of all, we state the following upper and lower bounds.
\begin{Proposition}\label{Prop.bounds}
We have 
$$
  -\alpha^2
  < \lambda_1^\alpha(B_R^\mathrm{ext}) <
  -\alpha^2 - \frac{\alpha}{R}
$$
for all negative~$\alpha$.
\end{Proposition}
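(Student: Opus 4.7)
The plan is to use the ODE reduction from Section~\ref{Sec.disk}, by which $\lambda_1^\alpha(B_R^\mathrm{ext}) = -k^2$ with $k>0$ the unique positive solution of the implicit equation~\eqref{implicit}. Rewriting that equation as $K_1(kR)/K_0(kR) = |\alpha|/k$, both bounds become one-sided estimates on the ratio $K_1/K_0$ evaluated at $x = kR$.

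For the lower bound $-\alpha^2 < \lambda_1^\alpha(B_R^\mathrm{ext})$, equivalent to $k < |\alpha|$, it suffices to show that $K_1(x) > K_0(x)$ for every $x > 0$. The cleanest route is the standard integral representation
\[
  K_\nu(x) = \int_0^\infty e^{-x\cosh t} \cosh(\nu t) \, \der t,
\]
from which
\[
  K_1(x) - K_0(x) = \int_0^\infty e^{-x\cosh t}(\cosh t - 1) \, \der t > 0
\]
since $\cosh t > 1$ for $t>0$.

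For the upper bound $\lambda_1^\alpha(B_R^\mathrm{ext}) < -\alpha^2 - \alpha/R$, rather than pushing the Bessel-ratio approach (which would require the nontrivial monotonicity $(K_1/K_0)' < 0$), I would test the Rayleigh quotient~\eqref{Rayleigh.ext} against the radial function
\[
  \psi(x) := e^{\alpha(|x|-R)},
\]
which mimics the ground state of the one-dimensional Robin problem on $[R,\infty)$ with the same boundary parameter. This $\psi$ lies in $W^{1,2}(B_R^\mathrm{ext})$ since $\alpha<0$, and polar coordinates reduce $\|\nabla\psi\|_{\sii(B_R^\mathrm{ext})}^2$ and $\|\psi\|_{\sii(B_R^\mathrm{ext})}^2$ to elementary integrals of the form $\int_0^\infty s^j e^{2\alpha s}\, \der s$ for $j\in\{0,1\}$. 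A direct computation then yields
\[
  \frac{Q_\alpha^{B_R^\mathrm{ext}}[\psi]}{\|\psi\|_{\sii(B_R^\mathrm{ext})}^2} = \alpha^2 \, \frac{1+2\alpha R}{1-2\alpha R},
\]
and a short algebraic rearrangement rewrites this as $-\alpha^2 - \alpha/R + \alpha/[R(1-2\alpha R)]$. Since $\alpha<0$ and $1-2\alpha R>1$, the correction term is strictly negative, so the variational principle closes the argument.

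Neither direction presents a substantial obstacle. The main creative step is choosing the test function $e^{\alpha(|x|-R)}$, essentially dictated by the half-line analogue; the two closed-form identities above, once the elementary integrals are evaluated, are one-line computations. The only temptation to avoid is attempting to derive the upper bound directly from the Bessel-ratio identity $(K_1/K_0)'(x) = (K_1/K_0)^2 - (K_1/K_0)/x - 1$, which would force one to prove the monotonicity of $K_1/K_0$ — a true but nonelementary statement that the test-function route bypasses entirely.
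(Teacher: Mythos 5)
Your proof is correct, but it follows a genuinely different route from the paper's. For the lower bound the paper also starts from the identity $\lambda_R=-\alpha^2\bigl(K_0(kR)/K_1(kR)\bigr)^2$ obtained from~\eqref{implicit}, but then invokes Segura's ratio bound \cite[Eq.~74]{Segura_2011} to get $K_0/K_1<1$; your derivation of $K_1(x)>K_0(x)$ from the integral representation $K_\nu(x)=\int_0^\infty e^{-x\cosh t}\cosh(\nu t)\,\der t$ is an elementary, self-contained replacement for that citation. The real divergence is in the upper bound: the paper stays with the Bessel ratio and uses the Tur\'an-type inequality of \cite[Thm.~1]{Segura_2011} with $\nu=1/2$, which forces a case split between $\alpha\in(-R^{-1},0)$ and $\alpha\le -R^{-1}$ and a somewhat delicate squaring of an inequality whose right-hand side may change sign. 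You instead insert the explicit test function $\psi(x)=e^{\alpha(|x|-R)}$ into the variational characterisation~\eqref{Rayleigh.ext}; I checked the computation and the quotient does equal $\alpha^2(1+2\alpha R)/(1-2\alpha R)=-\alpha^2-\alpha/R+\alpha/[R(1-2\alpha R)]$, whose last term is strictly negative for $\alpha<0$, so the min-max principle gives the strict upper bound in one stroke, with no case analysis and no external Bessel estimates. What the paper's route buys in exchange is the exact relation $\lambda_R=-\alpha^2(K_0/K_1)^2$, which is structurally aligned with the derivative formula~\eqref{eq:lambda_R_derivative} used in Proposition~\ref{Prop.monotonicity}; what your route buys is elementarity and transparency (the test function is the exact half-line Robin ground state, so the correction term cleanly quantifies the curvature contribution $-\alpha/R$). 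Both arguments are complete and rigorous.
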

\begin{proof}
For simplicity, let us set $\lambda_R := \lambda_1^\alpha(B_R^\mathrm{ext})$
and recall the notation $\lambda_R =  -k^2$. 
Using~\eqref{implicit} and \cite[Eq.~74]{Segura_2011} (with $\nu = 0$), 
we have
\[
	\lambda_R =  -k^2 = 
	-\alpha^2\left(\frac{K_0(kR)}{K_1(kR)}\right)^2 > -\alpha^2.
\]
This establishes the lower bound of the proposition.
In the case $\alpha \in (-R^{-1},0)$, 
we obtain the upper bound from the elementary estimate
\[
	\lambda_R+\alpha^2+\frac{\alpha}{R} < \alpha^2+\frac{\alpha}{R} = 
	\alpha\left(\alpha+\frac{1}{R}\right) < 0
  \,,  
\]
where we have used the fact that~$\lambda_R$ is negative.
In the other case $\alpha \in (-\infty, -R^{-1}]$, 
we get by \cite[Thm.~1]{Segura_2011} (with $\nu = 1/2$) that
\[
	k^2 = 
	\alpha^2\left(\frac{K_0(kR)}{K_1(kR)}\right)^2
	>
	\frac{\alpha^2 (kR)^2}{1/2 + (kR)^2 + \sqrt{1/4 + (kR)^2}}
  \,.
\]
The latter inequality implies
\begin{equation}\label{eq:ineq}
	\sqrt{\frac{1}{4R^4} + \frac{k^2}{R^2}} > \alpha^2 - \frac{1}{2R^2} - k^2
  \,.
\end{equation}	
If the right-hand side of~\eqref{eq:ineq} is negative, then
\[
	-k^2 < -\alpha^2 + \frac{1}{2R^2} 
	= -\alpha^2 + \frac{1}{R}\frac{1}{2R}
	\le -\alpha^2 - \frac{\alpha}{2R} < -\alpha^2 - \frac{\alpha}{R}
  \,,
\]
which is the desired inequality.
If the right-hand side of~\eqref{eq:ineq} is non-negative, 
we take the squares of both the right- and left-hand sides
of~\eqref{eq:ineq} and obtain
\[
	\frac{1}{4R^4} + \frac{k^2}{R^2} > 
	\left(\alpha^2 - \frac{1}{2R^2} - k^2\right)^2
  =    \left(\alpha^2 - \frac{1}{2R^2}\right)^2 
	- 2k^2\left(\alpha^2 - \frac{1}{2R^2}\right)+ k^4
  \,.
\]
This inequality is equivalent to
\[
	0 > 
	(\alpha^2 - k^2)^2 - \frac{\alpha^2}{R^2}
  \,.
\]
Consequently,
\[
	\alpha^2 - k^2 < -\frac{\alpha}{R}
  \,,
\]
which again yields the desired upper bound.
\end{proof} 

We notice that analogous upper and lower bounds for $\lambda_1^\alpha(B_R)$
have been recently established in~\cite[Thm.~3]{AFK}.
Moreover, it has been shown in~\cite[Thm.~5]{AFK} that 
$R \mapsto \lambda_1^\alpha(B_R)$ is strictly increasing.
Now we have a reversed monotonicity result.
\begin{Proposition}\label{Prop.monotonicity}
If~$\alpha$ is negative, then 
$
  R \mapsto \lambda_1^\alpha(B_R^\mathrm{ext})
$
is strictly decreasing.
\end{Proposition}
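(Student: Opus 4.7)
The plan is to parameterise the eigenvalue through the implicit Bessel equation~\eqref{implicit} and to differentiate it in~$R$. Write $\lambda_1^\alpha(B_R^{\mathrm{ext}}) = -k^2$, where $k=k(R)>0$ is defined by~\eqref{implicit}, and introduce the logarithmic-derivative ratio
\[
  h(t) := \frac{K_1(t)}{K_0(t)},
\]
so that~\eqref{implicit} takes the form $\Phi(k,R) := k\,h(kR) = -\alpha$. From the recurrences $K_0'=-K_1$ and $tK_1'=-tK_0-K_1$, one easily derives the Riccati-type identity $h'(t) = h(t)^2 - h(t)/t - 1$. My intention is to apply the implicit function theorem to $\Phi=-\alpha$, so the task reduces to determining the signs of $\partial_k\Phi$ and $\partial_R\Phi$.

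After substituting the Riccati identity, both derivatives simplify dramatically:
\[
  \partial_k\Phi = kR\bigl[h(kR)^2-1\bigr],
  \qquad
  \partial_R\Phi = k^2\,h'(kR).
\]
Invoking the boundary condition $kh(kR)=-\alpha$ to eliminate $h(kR)$ from the second expression yields the clean identity
\[
  \partial_R\Phi = \alpha^2 + \frac{\alpha}{R} - k^2.
\]
The positivity $\partial_k\Phi>0$ follows from the classical inequality $K_1>K_0$ on $(0,\infty)$, and hence $h>1$, which is a direct consequence of the integral representation $K_\nu(t)=\int_0^\infty e^{-t\cosh u}\cosh(\nu u)\,\mathrm{d}u$. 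The negativity $\partial_R\Phi<0$ is precisely the refined upper bound $k^2>\alpha^2+\alpha/R$ supplied by Proposition~\ref{Prop.bounds}. Hence $dk/dR = -\partial_R\Phi/\partial_k\Phi > 0$, and therefore $\lambda_1^\alpha(B_R^{\mathrm{ext}}) = -k^2$ is strictly decreasing in~$R$.

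The main obstacle is the sign of $\partial_R\Phi$: unlike $\partial_k\Phi$, which is manifestly positive once one knows $h>1$, the quantity $\partial_R\Phi$ is a combination of terms with no obvious definite sign. Only after using the boundary condition itself to reduce it to $\alpha^2+\alpha/R-k^2$ does the sign become accessible, and it is then controlled by the sharp upper bound from Proposition~\ref{Prop.bounds}. In particular, the cruder estimate $\lambda_1^\alpha(B_R^{\mathrm{ext}})<0$ would not suffice, confirming that the refined upper bound proved earlier in this section is genuinely needed for the monotonicity.
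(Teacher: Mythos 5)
Your proof is correct, and it reaches the conclusion by a genuinely different route from the paper. The paper follows the strategy of \cite[Thm.~5]{AFK}: it starts from a Hadamard-type formula for $\partial\lambda_R/\partial R$ involving the boundary value of the eigenfunction and its normalisation integral, evaluates $\int_R^\infty K_0(kr)^2\,r\,\der r$ explicitly via the antiderivative $\tfrac{r^2}{2}[K_0(kr)^2-K_1(kr)^2]$, and then uses the secular equation~\eqref{implicit} to arrive at the closed form~\eqref{eq:lambda_R_derivative}. You instead differentiate the secular equation directly, using the Riccati identity $h'=h^2-h/t-1$ for $h=K_1/K_0$ to compute $\partial_k\Phi$ and $\partial_R\Phi$; a short computation shows that your $-2k\,\der k/\der R$ coincides exactly with the paper's expression $-\tfrac{2}{R}\lambda_R(\lambda_R+\alpha^2+\alpha/R)/(\lambda_R+\alpha^2)$, so the two arguments converge on the same identity and both close with Proposition~\ref{Prop.bounds}. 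What your route buys is the avoidance of the eigenvalue-derivative formula and of the Bessel integral identity~\eqref{identity}; what it costs is the extra input $K_1>K_0$ (needed both for $\partial_k\Phi>0$ and, implicitly, for the global applicability of the implicit function theorem), though you could equally have obtained $\alpha^2-k^2>0$ from the lower bound of Proposition~\ref{Prop.bounds} rather than re-deriving it from the integral representation. One small point worth making explicit: since $\Phi(\cdot,R)$ is strictly increasing in $k$ (increasing from $0$ to $\infty$), the root $k(R)$ of $\Phi=-\alpha$ is unique, so there is no branch ambiguity and the implicit function theorem yields a globally defined smooth $k(R)$; as written you use $\partial_k\Phi>0$ only locally. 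Your closing observation that the crude bound $\lambda_R<0$ would not suffice and that the refined upper bound of Proposition~\ref{Prop.bounds} is genuinely needed is accurate and matches the paper's logic.
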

\begin{proof}
We follow the strategy of the proof of~\cite[Thm.~5]{AFK}.
Computing the derivative of $\lambda_R := \lambda_1^\alpha(B_{R}^\mathrm{ext})$
using the differential equation that~$\lambda_R$ satisfies, 
one finds (\cf~\cite[Lem.~2]{AFK})
\begin{equation}\label{derivative}
  \frac{\partial \lambda_R}{\partial R} 
  = - \frac{2}{R} \, \lambda_R 
  + \alpha \, \frac{ \psi_R(R)^2 }
  {\displaystyle \int_R^\infty \psi_R(r)^2 \, r \, \der r}
  \,,
\end{equation}
where $\psi_R(r) := K_0(k r)$ is the eigenfunction 
corresponding to~$\lambda_R = -k^2$.
Employing the formula
\begin{equation}\label{identity}
  \int_R^\infty K_0(kr)^2 \, r \, \der r
  = \frac{r^2}{2}\left[
  	K_0(k r)^2 - K_1(k r)^2
  	\right]\bigg|_{r=R}^{r= \infty} =  	
  \frac{R^2}{2} \left[
  K_1(k R)^2 - K_0(k R)^2
  \right]
\end{equation}
and~\eqref{implicit}, we eventually arrive at 
the equivalent identity for the eigenvalue derivative
\begin{equation}\label{eq:lambda_R_derivative} 
\begin{split}
  \frac{\partial \lambda_R}{\partial R} 
	= - \frac{2}{R} \, \lambda_R \,
  \frac{\displaystyle \lambda_R+\alpha^2+\frac{\alpha}{R}}{\lambda_R+\alpha^2}
  \,.
\end{split}  
\end{equation}
The proof is concluded by recalling Proposition~\ref{Prop.bounds},
which implies that the right-hand side is negative.
\end{proof} 
%

\section{Proof of Theorem~\ref{Thm}}\label{Sec.proof}
%
Now we are in a position to establish Theorem~\ref{Thm}.
Throughout this section, 
$\Omega\subset\Real^2$~is a \emph{convex} bounded open set
with smooth boundary $\Sigma :=\partial\Omega$;
then $\Omega^\mathrm{ext}$ is necessarily connected.
We also assume that~$\alpha$ is negative.

The main idea of the proof is to parameterise $\Omega^\mathrm{ext}$
by means of the \emph{parallel coordinates}
\begin{equation}\label{tube} 
  \mathcal{L} : \Sigma \times (0,\infty) \to \Omega^\mathrm{ext} :
  \left\{(s,t) \mapsto s + n(s) \, t \right\}
  \,,
\end{equation}
where~$n$ is the outer unit normal to~$\Omega$ as above. 
Notice that~$\mathcal{L}$ is indeed a diffeomorphism
because of the convexity and smoothness assumptions. 
To be more specific, the metric induced by~\eqref{tube} 
acquires the diagonal form
\begin{equation}\label{metric} 
  \der\mathcal{L}^2 = (1-\kappa(s)\,t)^2 \, \der s^2 + \der t^2
  \,,
\end{equation}
where $\kappa := -\der n$ is the \emph{curvature} of~$\partial\Omega$.
By our choice of~$n$, 
the function~$\kappa$ is non-positive because~$\Omega$ is convex 
(\cf~\cite[Thm.~2.3.2]{Kli}). 
Consequently, the Jacobian of~\eqref{tube} given by
$
  1-\kappa(s)\,t
$
is greater than or equal to~$1$.
In particular, it is positive and therefore~$\mathcal{L}$
is a local diffeomorphism by the inverse function theorem
(see, \eg, \cite[Thm. 0.5.1]{Kli}).
To see that it is a global diffeomorphism,
notice that~~$\mathcal{L}$ is injective, 
because of the convexity assumption,
and that~~$\mathcal{L}$ is surjective
thanks to the smoothness of $\Omega$.

Summing up, $\Omega^\mathrm{ext}$ can be identified 
with the product manifold $\Sigma \times (0,\infty)$
equipped with the metric~\eqref{metric}.
Consequently, the Hilbert space $\sii(\Omega^\mathrm{ext})$ 
can be identified with 
$$
  \mathcal{H} :=
  \sii\big(\Sigma \times (0,\infty),(1-\kappa(s)\,t) \, \der s \, \der t\big)
  \,.
$$ 
The identification is provided by the unitary transform
\begin{equation}\label{eq:U}
 U : \sii(\Omega^\mathrm{ext}) \to \mathcal{H} :
 \{u \mapsto u \circ \mathcal{L} \}
 \,.
\end{equation}
It is thus natural to introduce the unitarily equivalent operator
$
  H_\alpha := U (-\Delta_\alpha^{\Omega^\mathrm{ext}}) U^{-1}
$,
which is the operator associated with the transformed form
$
  h_\alpha[\psi] :=  Q_\alpha^{\Omega^\mathrm{ext}}[U^{-1}\psi]
$, 
$\Dom(h_\alpha) := U \Dom(Q_\alpha^{\Omega^\mathrm{ext}})$.
Of course, we have the equivalent characterisation 
of the lowest eigenvalue
\begin{equation}\label{Rayleigh.parallel}
  \lambda_1^\alpha(\Omega^\mathrm{ext}) 
  = \inf_{\stackrel[\psi\not=0]{}{\psi \in \Dom(h_\alpha)}} 
  \frac{\displaystyle h_\alpha[\psi]}
  {\displaystyle \, \|\psi\|_\mathcal{H}^2 }
  \,.
\end{equation}

The set of restrictions of functions from $C_0^\infty(\Real^2)$ to $\Omega^\mathrm{ext}$
is a core of $Q_\alpha^{\Omega^\mathrm{ext}}$.
Taking~$u$ from this core, it is easily seen that 
$\psi := U u$ is a restriction of a function $C_0^\infty(\partial\Omega \times \Real)$
to $\partial\Omega \times (0,\infty)$ and that
$$
\begin{aligned}
  h_\alpha[\psi] 
  &= \int_{\Sigma\times(0,\infty)} 
  \left(\frac{\displaystyle
  |\partial_s\psi(s,t)|^2}{1-\kappa(s) \, t} + 
  |\partial_t\psi(s,t)|^2 \, (1-\kappa(s) \, t)\right)  \, \der s \, \der t
  + \alpha \int_{\Sigma} |\psi(s,0)|^2 \, \der s
  \,,
  \\
  \|\psi\|_\mathcal{H}^2 
  &= \int_{\Sigma\times(0,\infty)} 
  |\psi(s,t)|^2 \, (1-\kappa(s) \, t) \, \der s \, \der t
  \,.
\end{aligned}
$$
Restricting in~\eqref{Rayleigh.parallel} to test functions 
with level lines parallel to~$\Sigma$,
\ie\ taking~$\psi$ independent of~$s$,
we obtain 
\begin{equation}\label{Rayleigh.bound}
  \lambda_1^\alpha(\Omega^\mathrm{ext}) 
  \leq
  \inf_{\stackrel[\psi\not=0]{}{\psi \in C_0^\infty([0,\infty))}} 
  \frac{\displaystyle
  \int_0^\infty 
  |\psi'(t)|^2 \, (|\Sigma|+2\pi t) \, \der t
  + \alpha \, |\Sigma| \, |\psi(0)|^2}
  {\displaystyle
  \int_0^\infty
  |\psi(t)|^2 \, (|\Sigma|+2\pi t) \, \der t} 
  \,.
\end{equation}
Here we have used the geometric identity $\int_{\Sigma} \kappa = -2\pi$
(see, \eg, \cite[Cor.~2.2.2]{Kli}).

Now, assume that the perimeter is fixed, 
\ie\ $|\Sigma| = c_1$.
Since the perimeter is the only geometric quantity 
on which the right-hand side of~\eqref{Rayleigh.bound} depends
and since the eigenfunction corresponding to 
$\lambda_1^\alpha(B_{R_1}^\mathrm{ext})$ is radially symmetric
(therefore independent of~$s$ in the parallel coordinates),
we immediately obtain 
\begin{equation}\label{immediate}
  \lambda_1^\alpha(\Omega^\mathrm{ext}) \leq \lambda_1^\alpha(B_{R_1}^\mathrm{ext})
\end{equation}
for any~$\Omega$ of the fixed perimeter~$c_1$. 
This proves the isoperimetric optimisation result of Theorem~\ref{Thm}.
 
To establish the isochoric optimisation result of Theorem~\ref{Thm},
we notice that since $|\Sigma| = |\partial B_{R_1}|$,
the classical (geometric) isoperimetric inequality implies $|\Omega| \leq |B_{R_1}|$,
with equality if and only if~$\Omega$ is the disk. 
Hence, there exists $B_{R_2} \subset B_{R_1}$ such that $|B_{R_2}| = |\Omega|$.
By~\eqref{immediate}, it is then enough to show that 
$\lambda_1^\alpha(B_{R_1}^\mathrm{ext}) \leq \lambda_1^\alpha(B_{R_2}^\mathrm{ext})$.
This inequality (and therefore the second result of Theorem~\ref{Thm})
is a consequence of the more general monotonicity
result of Proposition~\ref{Prop.monotonicity}.
\hfill \qed

\section{Conclusions}\label{Sec.end}
%
Let us conclude the paper by several comments on our results.

\subsection{Necessity of convexity}\label{Sec.counterexample}
The assumption on $\Omega$ to be convex is necessary
in view of the following simple counterexample. Let $\Omega\subset\mathbb{R}^2$ 
be the union of two disks $B_{R_3}(x_1)$ and $B_{R_3}(x_2)$ 
of the same radius $R_3 > 0$ whose 
centres~$x_1$ and~$x_2$ are chosen in such a way that the closures of the disks in $\mathbb{R}^2$ are disjoint.
According to~\cite[Thm. 1.1]{KP} (see also \cite[Thm.~3]{FK7}), we have
\begin{align*}
	\lambda^\alpha_1(\Omega^{\rm ext}) & = 
	-\alpha^2 - \frac{\alpha}{R_3} + o(\alpha) \,, &
	 \alpha\rightarrow -\infty \,, \\
	\lambda^\alpha_1(B_R^{\rm ext})&  = -\alpha^2 - \frac{\alpha}{R} + o(\alpha) \,,
	 &\alpha\rightarrow -\infty \,.
\end{align*}
The constraints $|\partial\Omega| = |\partial B_{R_1}|$ and $|\Omega| = |B_{R_2}|$ yield that $R_1 = 2R_3$ and $R_2 = \sqrt{2}R_3$, respectively. 
Taking into account the above large coupling
asymptotics and the relations between the radii,
we observe that for $\alpha < 0$ with sufficiently large $|\alpha|$
the reverse inequalities 
$\lambda^\alpha_1(B_{R_1}^{\rm ext}) < \lambda^\alpha_1(\Omega^{\rm ext})$
and 
$\lambda^\alpha_1(B_{R_2}^{\rm ext}) < \lambda^\alpha_1(\Omega^{\rm ext})$ 
are satisfied.

We point out that, while the domain~$\Omega$ 
of the above counterexample is disconnected,
its exterior~$\Omega^\mathrm{ext}$ is still connected.
We leave it as an open question whether there exists a counterexample
in the class of connected non-convex domains~$\Omega$.

\subsection{Uniqueness of the optimiser}\label{Sec.uniqueness}
In this subsection, 
we demonstrate that the exterior of the disk is the \emph{unique} maximiser
for both the isochoric and isoperimetric 
spectral optimisation problems of Theorem~\ref{Thm}. 
\begin{Theorem}
Let~$\alpha$ be negative. 
For all convex smooth bounded open sets~$\Omega\subset\Real^2$ 
of a fixed perimeter  
(respectively, fixed area)
different from a disk~$B_{R_1}$ of the same perimeter
(respectively, from a disk $B_{R_2}$ of the same area), 
we have a strict inequality
\begin{flalign}\label{unique}
  \fbox{$\alpha < 0$} &&
  \lambda_1^\alpha(\Omega^\mathrm{ext}) 
  < \lambda_1^\alpha(B_{R_1}^\mathrm{ext})
  \qquad 
  \mbox{(respectively,} \quad 
  \lambda_1^\alpha(\Omega^\mathrm{ext}) 
  < \lambda_1^\alpha(B_{R_2}^\mathrm{ext})
  \mbox{)}
  \,.
  &&
\end{flalign}
\end{Theorem}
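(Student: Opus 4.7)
The plan is to revisit the proof of Theorem~\ref{Thm} in Section~\ref{Sec.proof} and track the cases of equality.

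For the isoperimetric assertion, suppose $\lambda_1^\alpha(\Omega^\mathrm{ext}) = \lambda_1^\alpha(B_{R_1}^\mathrm{ext})$, and let $\psi_*$ denote the radial ground-state eigenfunction of $-\Delta_\alpha^{B_{R_1}^\mathrm{ext}}$, written as a function of the distance $t$ to the boundary of $B_{R_1}$. Used as an $s$-independent test function in the variational characterisation~\eqref{Rayleigh.parallel}, it produces in the Rayleigh quotient of $H_\alpha$ exactly the value $\lambda_1^\alpha(B_{R_1}^\mathrm{ext})$: the computation leading to~\eqref{Rayleigh.bound} collapses the $s$-dependent terms via the integral identity $\int_\Sigma \kappa\,\der s = -2\pi$. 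Under the equality assumption, $\psi_*$ therefore attains the infimum in~\eqref{Rayleigh.parallel}, and by the simplicity of $\lambda_1^\alpha(\Omega^\mathrm{ext})$ recalled in Section~\ref{Sec.pre} it must coincide, up to a scalar, with the ground-state eigenfunction of $H_\alpha$.

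Since $\psi_*$ depends only on $t$, the eigenvalue equation $H_\alpha \psi_* = \lambda_1^\alpha(\Omega^\mathrm{ext})\,\psi_*$ reduces in parallel coordinates to the pointwise identity
\begin{equation*}
  -\psi_*''(t) + \frac{\kappa(s)}{1-\kappa(s)\,t}\,\psi_*'(t) = \lambda_1^\alpha(\Omega^\mathrm{ext})\,\psi_*(t),
  \qquad (s,t) \in \Sigma \times (0,\infty).
\end{equation*}
Because $\psi_*$ is a nonzero multiple of $t \mapsto K_0\bigl(k(R_1+t)\bigr)$ with $k = \sqrt{-\lambda_1^\alpha(B_{R_1}^\mathrm{ext})}$, and $K_0$ is strictly decreasing on $(0,\infty)$, the derivative $\psi_*'$ has no zeros. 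Hence the coefficient $\kappa(s)/(1-\kappa(s)t)$ is independent of $s$, which (evaluating at $t=0$) forces $\kappa$ to be constant along $\Sigma$. The only smooth convex closed planar curve of constant curvature being a circle, $\Omega$ is a disk; the perimeter constraint then identifies it with $B_{R_1}$, contradicting the hypothesis.

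For the isochoric assertion, let $R_1 > 0$ be the radius of the disk having the same perimeter as $\Omega$. Since $\Omega$ is convex and different from $B_{R_2}$, the strict form of the classical isoperimetric inequality yields $|\partial\Omega| > |\partial B_{R_2}|$, hence $R_1 > R_2$. Combining the (non-strict) isoperimetric bound provided by Theorem~\ref{Thm} with the strict monotonicity of $R \mapsto \lambda_1^\alpha(B_R^\mathrm{ext})$ from Proposition~\ref{Prop.monotonicity}, we obtain
\begin{equation*}
  \lambda_1^\alpha(\Omega^\mathrm{ext}) \le \lambda_1^\alpha(B_{R_1}^\mathrm{ext})
  < \lambda_1^\alpha(B_{R_2}^\mathrm{ext}),
\end{equation*}
which is the desired strict inequality. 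The main delicate point is the promotion of $\psi_*$ from a test function attaining the value $\lambda_1^\alpha(B_{R_1}^\mathrm{ext})$ to an actual eigenfunction of $H_\alpha$; once this step is in place, the rigidity of the curvature follows from the pointwise form of the eigenvalue equation.
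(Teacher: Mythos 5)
Your proof is correct, and while it shares the paper's overall strategy, the rigidity step in the isoperimetric part is genuinely different. Both arguments reduce the equality case to the statement that the ground state of $H_\alpha$ is independent of $s$ (your route to this, via the explicit test function $\psi_*$ attaining the infimum in~\eqref{Rayleigh.parallel} and simplicity, is in fact more explicit than the paper's one-line assertion). From there the paper stays entirely at the level of the weak formulation: it inserts product test functions $\varphi_1(s)\varphi_2(t)$, subtracts the resulting identities at two points $s_1,s_2$ with $\kappa(s_1)\neq\kappa(s_2)$ to isolate the relation $\int_0^\infty \varphi_2'\psi'\,t\,\der t=\lambda_1^\alpha\int_0^\infty \varphi_2\psi\,t\,\der t$, and then runs a cut-off/dominated-convergence argument with $\varphi_2=\eta_n\psi$ to conclude $\int_0^\infty|\psi'|^2\,t\,\der t=\lambda_1^\alpha\int_0^\infty|\psi|^2\,t\,\der t$, which is impossible since $\lambda_1^\alpha<0$. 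You instead upgrade the eigenvalue equation to its pointwise form in parallel coordinates and exploit the explicit identification $\psi_*(t)=K_0(k(R_1+t))$, whose derivative $-kK_1(k(R_1+t))$ never vanishes, to force $\kappa$ to be constant and hence $\Sigma$ to be a circle. Your version is shorter and avoids the limiting argument, at the price of invoking elliptic regularity (to pass from the weak to the pointwise equation) and the special function input $K_1>0$; the paper's version is purely variational and does not need the explicit form of the eigenfunction. The isochoric part of your argument — strict geometric isoperimetric inequality giving $R_1>R_2$, combined with Theorem~\ref{Thm} and the strict monotonicity of Proposition~\ref{Prop.monotonicity} — coincides with the paper's.
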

\begin{proof}
As usual, $\lambda_1^\alpha := \lambda_1^\alpha(\Omega^\mathrm{ext})$ and $u_1^\alpha$ 
denote respectively the lowest eigenvalue and the corresponding eigenfunction
of $-\Delta^{\Omega^\mathrm{ext}}_\alpha$ with $\alpha < 0$.
Without loss of generality, we assume that $u_1^\alpha$
is positive everywhere in $\Omega^\mathrm{ext}$.
Furthermore, we introduce the auxiliary function $\psi := U u_1^\alpha$
where the unitary transform $U$ is as in~\eqref{eq:U}.
In view of Theorem~\ref{Thm} and inequality~\eqref{Rayleigh.bound}
in its proof, non-uniqueness of the optimiser
for the spectral isoperimetric problem would necessarily imply
the existence of a non-circular domain $\Omega$
for which the function~$\psi$ is independent of~$s$; 
\ie~its level lines are parallel to $\Sigma := \partial\Omega$. 
In the sequel, with a slight abuse of notation, 
we use the same symbol~$\psi$ to denote
the function $t\mapsto \psi(t)$ of a single variable.

Restricting to test functions with support lying inside~$\Omega^\mathrm{ext}$,
the variational characterisation of~$\psi$ implies
\[
	\forall \varphi \in C^\infty_0(\Sigma \times (0,\infty)) 
  \,, \qquad 
  h_\alpha(\varphi,\psi) = \lambda_1^\alpha (\varphi,\psi)_\cH
  \,.
\]
It is enough to consider real-valued test functions~$\varphi$ only.
Taking into account that~$\psi$ is independent of~$s$,
we end up with the identity
\[
	\int_{\Sigma\times (0,\infty)}
	\partial_t \varphi(s,t) \, \psi'(t) \, (1-\kappa(s)t)
        \, \der s \, \der t = \lambda_1^\alpha 
	\int_{\Sigma\times (0,\infty)}
	\varphi(s,t) \, \psi(t) \, (1-\kappa(s)t)
	\, \der s \, \der t
\]
valid for all real-valued $\varphi \in C^\infty_0(\Sigma \times (0,\infty))$.
Now we restrict our attention to test functions 
of the type $\varphi(s,t) = \varphi_1(s)\varphi_2(t) 
\in C^\infty_0(\Omega^\mathrm{ext})$ with
$\varphi_1 \in C^\infty(\Sigma)$ and
$\varphi_2 \in C^\infty_0((0,\infty))$.
Then the above displayed equation reduces to
\[
	\int_\Sigma\varphi_1(s)\int_0^\infty 
        \varphi_2'(t) \, \psi'(t)\,  
	(1-\kappa(s)t) \, \der t \, \der s 
	=
	\lambda_1^\alpha \int_\Sigma
	\varphi_1(s)\int_0^\infty \varphi_2(t) \, \psi(t) \, (1-\kappa(s)t) 
        \, \der t \, \der s.
\]
Density of $C^\infty(\Sigma)$ in $L^1(\Sigma)$ gives us
\begin{equation}\label{eq:last}
	\forall s\in\Sigma \,, \ 
	\varphi_2 \in C^\infty_0((0,\infty)) \,,
	\quad
	\int_0^\infty 
        \varphi_2^\prime(t) \, \psi^\prime(t) \,  
	(1-\kappa(s)t) \, \der t 
	=
	\lambda_1^\alpha \int_0^\infty 
        \varphi_2(t) \, \psi(t) \, (1-\kappa(s)t) \, \der t.
\end{equation}
Since $\Omega$ is not a disk, 
there exist $s_1, s_2 \in \Sigma$ such that $\kappa(s_1) \ne \kappa(s_2)$ 
(see, \eg, \cite[Prop.~1.4.3]{Kli}).
Taking the difference of~\eqref{eq:last} with $s = s_1$ and with $s = s_2$, 
we eventually get
\begin{equation}\label{eq:finalcondition}
		\forall \varphi_2 \in C^\infty_0((0,\infty)) \,,
		\qquad
		\int_0^\infty 
 \varphi_2^\prime(t) \, \psi^\prime(t)  
		\, t \, \der t 
		=
		\lambda_1^\alpha 
		\int_0^\infty 
 \varphi_2(t) \, \psi(t) \, t \,\der t \,.
\end{equation}
Let us fix a function $\eta \in C^\infty_0((0,\infty))$ which 
satisfies the following properties:
\begin{itemize}\setlength{\parskip}{0.6ex}
	\item [(i)] $0\le \eta \le 1$,
	\item [(ii)] $\eta(t) = 1$ for all $t \in [1,2]$,
	\item [(iii)] $\supp \eta \subset [0,3]$.
\end{itemize}
%
Furthermore, 
for every positive integer~$n$,
we define a function
$\eta_n\in C^\infty_0((0,\infty))$ by
\begin{equation}
	\eta_n(t) := \begin{cases}
					\eta(nt),	& t\in \left(0,\frac{2}{n}\right),\\
					1,			& t\in \left(\frac{2}{n},n+1\right),\\
					\eta(t-n),& t\in (n+1,\infty) \,. 
				\end{cases}	
\end{equation}
Now we plug $\varphi_2 = \eta_n\psi\in C^\infty_0((0,\infty))$ into~\eqref{eq:finalcondition}.
By the dominated convergence theorem
(using that $t\mapsto |\psi(t)|^2 \, t$ is integrable), we obtain
\begin{equation}\label{eq:limRHS}
		\int_0^\infty |\psi(t)|^2 \, \eta_n(t) \, t \, \der t
		\xrightarrow[n \to \infty]{} 
\int_0^\infty |\psi(t)|^2 \, t \, \der t
\,.
\end{equation}
The left-hand side in~\eqref{eq:finalcondition} with $\varphi_2 = \eta_n\psi$
can be rewritten as
\begin{equation}\label{eq:decomposition}
	I_n := \int_0^\infty 
   (\eta_n\psi)^\prime(t) \, \psi^\prime(t) \, t \, \der t 
	=
	\int_0^\infty 
	|\psi^\prime(t)|^2 \, \eta_n(t) \, t \, \der t
	+
	\int_0^\infty 
	\psi^\prime(t) \, \psi(t) \, \eta_n^\prime(t) \, t \, \der t \,. 
\end{equation}
For the first term on the right-hand side in~\eqref{eq:decomposition}
we get  
\begin{equation}\label{eq:Jn}
	I_n^{(1)}  := 
	\int_0^\infty 
	|\psi^\prime(t)|^2 \, \eta_n(t) \, t \, \der t
	\xrightarrow[n \to \infty]{} 
\int_0^\infty |\psi^\prime(t)|^2 \, t \, \der t
\,,
\end{equation}
by the dominated convergence (using that $t\mapsto |\psi^\prime(t)|^2 \, t$ is integrable).
The second term on the right-hand side in~\eqref{eq:decomposition}
can be further transformed as
\begin{equation}\label{eq:decomposition2}
\begin{split}
	\int_0^\infty 
	\psi^\prime(t) \, \psi(t) \, \eta_n^\prime(t) \, t \, \der t
	& = 	
	n\int_0^{2/n} \psi^\prime(t) \, \psi(t) \, \eta^\prime(nt) \, t \, \der t
	+
	\int_{n+1}^{\infty} \psi^\prime(t) \, \psi(t) \, \eta^\prime(t-n) \, t \, \der t\\
	& = 
	\int_0^{2}  \psi^\prime\left(\frac{r}{n}\right) \psi\left(\frac{r}{n}\right)
	\frac{\eta^\prime(r) r}{n} \, \der r
	+
	\int_1^3 
	\psi^\prime(r\!+n) \, \psi(r\!+n) \,
	\eta^\prime(r) \, (r\!+n) \, \der r.
\end{split}	
\end{equation}
Again making use of the dominated convergence theorem,
we obtain
\begin{equation}\label{eq:Kn}
		I_n^{(2)} := 
		\int_0^{2} \psi^\prime\left(\frac{r}{n}\right) \psi\left(\frac{r}{n}\right)
		\frac{\eta^\prime(r) \, r}{n} \, \der r
		\xrightarrow[n \to \infty]{} 0 
\,;
\end{equation}		
here we implicitly employed that the integrand is uniformly bounded in $n\in\dN$.
Observe that
\[
	\left|\sum_{n=1}^\infty
	\int_1^3 
	\psi^\prime(r+n) \, \psi(r+n) \,
	\eta^\prime(r) \, (r+n) \, \der r \right| 
	\le 
	2 \, \|\eta^\prime\|_\infty
	\int_0^\infty
	|\psi(t)\,\psi^\prime(t)| \, t \, \der t < \infty
 \,,
\]
where finiteness of the latter integral follows from the fact that
$\psi \in \Dom(h_\alpha)$. Therefore, we infer
\begin{equation}\label{eq:Ln}
	I_n^{(3)} := 	\int_1^3 
	\psi^\prime(r+n) \, \psi(r+n) \,
	\eta^\prime(r) \, (r+n) \, \der r 
        \xrightarrow[n\to\infty]{} 
        0 \,.
\end{equation}
Combining the decompositions~\eqref{eq:decomposition},~\eqref{eq:decomposition2}
with the limits~\eqref{eq:Jn},~\eqref{eq:Kn},~\eqref{eq:Ln}, 
we arrive at
\begin{equation}\label{eq:limLHS}
	I_n 
		=
	I_n^{(1)}
	+
	I_n^{(2)}
	+
	I_n^{(3)}
	\xrightarrow[n\to\infty]{} 
       \int_0^\infty |\psi^\prime(t)|^2 \, t \, \der t
\,. 
\end{equation}
The limits~\eqref{eq:limRHS},~\eqref{eq:limLHS} 
and the condition~\eqref{eq:finalcondition} imply
\[
	\int_0^\infty 
	| \psi^\prime(t)|^2
	\, t \, \der t 
	=
	\lambda_1^\alpha 
	\int_0^\infty |\psi(t)|^2 \, t \, \der t \,.
\]
Finally, taking into account that $\lambda_1^\alpha$ is negative,
we get a contradiction.
This completes the proof of the first strict inequality in~\eqref{unique}.

To show that disk is the unique optimiser for the spectral isochoric
inequality is much simpler than in the isoperimetric case. 
Suppose that there exists a non-circular domain $\Omega$ for which
$\lambda_1^\alpha(\Omega^\mathrm{ext}) = \lambda_1^\alpha(B^\mathrm{ext}_{R_{2}})$
with $|\Omega| = |B_{R_2}|$.
Note that for $B_{R_1}$ with $|\partial B_{R_1}|=  |\partial\Omega|$ we get
$R_2 < R_1$ using the standard geometric isoperimetric inequality.
Thus, Theorem~\ref{Thm} and 
Proposition~\ref{Prop.monotonicity} imply
\[
	\lambda_1^\alpha(\Omega^\mathrm{ext}) = \lambda_1^\alpha(B^\mathrm{ext}_{R_{2}})
	> 
	\lambda_1^\alpha(B^\mathrm{ext}_{R_1}) 
	\ge
	\lambda_1^\alpha(\Omega^\mathrm{ext})
  \,,  
\]	
which is obviously a contradiction.
\end{proof}
\begin{remark}
	As a consequence of the first claim in Theorem~\ref{Thm}, we obtain
	the following quantitative improvement upon the second inequality of~\eqref{unique}
	\begin{equation}\label{stability}
		\lambda_1^\alpha(\Omega^\mathrm{ext}) 
		\le 
		\lambda_1^\alpha(B_{R_2}^\mathrm{ext}) - 
		\big[\lambda_1^\alpha(B_{R_2}^\mathrm{ext}) - \lambda_1^\alpha(B_{R_1}^\mathrm{ext})\big]
\,,
	\end{equation}
	where the radii~$R_1$ and~$R_2$ can be easily 
	expressed through $|\partial\Omega|$ and $|\Omega|$, respectively,
	by virtue of the relations 
	$|B_{R_1}| = |\partial\Omega|$ and $|B_{R_2}| = |\Omega|$.
	In view of the inequality $R_2 < R_1$, 
	the difference $\lambda_1^\alpha(B_{R_2}^\mathrm{ext}) - \lambda_1^\alpha(B_{R_1}^\mathrm{ext})$ is positive by Proposition~\ref{Prop.monotonicity},
	so~\eqref{stability} indeed represents a quantified version of 
	the reverse spectral isochoric inequality in the spirit of~\cite{BP12, BuFeNiTr}. 
	More careful analysis of the derivative 
	in~\eqref{eq:lambda_R_derivative} can be used to get a positive
	lower bound on this difference in terms of $R_1$ and $R_2$.
\end{remark}

\subsection{Higher dimensions}\label{Sec.high}
We have already noticed that $\lambda_1^\alpha(\Omega^\mathrm{ext})$ 
is not necessarily a discrete eigenvalue in higher dimensions.
For any dimension $d \ge 3$, however, there exists a critical value $\alpha_0 < 0$
depending on~$\Omega$ 
such that $\lambda_1^\alpha(\Omega^\mathrm{ext})$ is a discrete eigenvalue
if, and only if, $\alpha < \alpha_0$,
so the optimisation problem in the exterior
of a compact set becomes non-trivial in this regime. 
In this subsection, we argue that no analogue of Theorem~\ref{Thm}
can be expected if $d \ge 3$.

To this aim we construct a simple counterexample
	which relies on the large coupling asymptotics for the lowest eigenvalue.
	First, we fix a ball $B_R\subset\Real^d$ of arbitrary radius $R > 0$.
	Further, let $\Omega_0\subset\Real^d$ be the union of two disjoint balls 
$B_r(x_1)$ and $B_r(x_2)$ of the same sufficiently small radius $r > 0$ whose centers~$x_1$ and~$x_2$ are located at a distance $L > 0$. 
Finally, we define the
	domain $\Omega$ as the convex hull of $\Omega_0$. 
By choosing $L > 0$ large enough, 
we can satisfy either of the constraints
$|\partial \Omega| = |\partial B_R|$ or $|\Omega| = |B_R|$. 
It can be easily checked
	that the domain $\Omega$ has a $C^{1,1}$ boundary 
and that the mean curvature
	of $\partial\Omega$ is piecewise constant, being equal to $-1/r$
	on the hemispheric cups and to $-\frac{d-2}{(d-1)r}$ on the cylindrical face
(in agreement with the rest of this paper,
we compute the mean curvature with respect to the outer normal 
to the bounded set~$\Omega$).
Applying~\cite[Thm. 1.1]{KP}, we arrive at
	\begin{align*}
		\lambda_1^\alpha(\Omega^\mathrm{ext}) 
		& = 
		-\alpha^2 - \frac{\alpha\,(d-2)}{r} + o(\alpha)\,,&\alpha\rightarrow-\infty\,,\\
		\lambda_1^\alpha(B_R^\mathrm{ext}) 
		& = 
		-\alpha^2 - \frac{\alpha(d-1)}{R} + o(\alpha)\,,&\alpha\rightarrow-\infty\,.
	\end{align*}	
	In view of the above asymptotics, we infer that 
	for $r < \frac{d-2}{d-1} R$ and for $\alpha < 0$ 
	with sufficiently large $|\alpha|$ the reverse inequality 
	$\lambda^\alpha_1(B_{R}^{\rm ext}) < \lambda^\alpha_1(\Omega^{\rm ext})$
	holds.
	
	We expect that a counterexample based on a ($C^\infty$-)smooth 
domain can also be constructed with additional technical efforts.
	
\subsection{More on dimension three}\label{Sec.three}
The previous subsection demonstrates that,
contrary to the two-dimensional situation, 
the exterior of the ball can be a global maximiser 
neither for the isoperimetric nor isochoric problems.
Let us look at where the technical approach of the present paper
fails in dimension three.

Let~$\Omega$ be a convex smooth bounded open set in~$\Real^3$.
In this case, the usage of parallel coordinates based 
on~$\Sigma:=\partial\Omega$ 
and restricting to test functions depending only on the distance 
to the boundary yield
\begin{equation}\label{Rayleigh.parallel.3D}
  \lambda_1^\alpha(\Omega^\mathrm{ext}) 
  \leq \inf_{\stackrel[\psi\not=0]{}{\psi \in C_0^\infty([0,\infty))}} 
  \frac{\displaystyle
  \int_{\Sigma\times(0,\infty)} 
  |\psi'(t)|^2 \, (1-2M(s) \, t+K(s) \, t^2) \, \der \Sigma \, \der t
  + \alpha \int_{\Sigma} |\psi(0)|^2 \, \der \Sigma}
  {\displaystyle
  \int_{\Sigma\times(0,\infty)} 
  |\psi(t)|^2 \, (1-2M(s) \, t+K(s) \, t^2) \, \der \Sigma \, \der t }
  \,.
\end{equation}
Here~$\der\Sigma:=|g|^{1/2}(s) \, \der s$ is the surface measure of~$\Sigma$,
with~$g$ being the Riemannian metric of~$\Sigma$ induced by the embedding
of~$\Sigma$ in~$\Real^3$, 
and $K$ and~$M$ denote respectively 
the \emph{Gauss curvature} and the \emph{mean curvature} of~$\Sigma$
(see~\cite{CEK} for more geometric details). 
$K$~is an intrinsic quantity, while~$M$ is non-positive 
when computed with respect to our choice (outer to~$\Omega$)
of the normal vector field~$n$.

By definition, $\int_{\Sigma} 1 \, \der\Sigma$ 
equals the total area $|\Sigma|$ of~$\Sigma$,
while $\int_{\Sigma} K(s) \, \der\Sigma = 4\pi$
by the Gauss-Bonnet theorem for closed surfaces
diffeomorphic to the sphere (see~\cite[Thm.~6.3.5]{Kli}).
The quantity $\mathcal{M}_\Sigma:=\int_{\Sigma} |M(s)| \, \der\Sigma$ 
is known as the half of the \emph{total mean curvature} of~$\Sigma$  
(see~\cite[\S~28.1.3]{BuZa}).
Moreover, we have \cite[\S~19]{BuZa}
$
  \mathcal{M}_\Sigma = 2 \pi \, b(\Omega)
$,
where $b(\Omega)$ is the \emph{mean width} of~$\Omega$. 
Consequently,
\begin{equation} 
  \lambda_1^\alpha(\Omega^\mathrm{ext}) 
  \leq \inf_{\stackrel[\psi\not=0]{}{\psi \in C_0^\infty([0,\infty))}} 
  \frac{\displaystyle
  \int_0^\infty
  |\psi'(t)|^2 \, (|\Sigma|+2 \, \mathcal{M}_\Sigma \, t + 4\pi \, t^2) \, 
  \der t
  + \alpha \, |\Sigma| \, |\psi(0)|^2}
  {\displaystyle
  \int_0^\infty
  |\psi(t)|^2 \, (|\Sigma|+2 \, \mathcal{M}_\Sigma \, t+ 4\pi \, t^2) \, 
   \der t }
  \,.
\end{equation}

To get a reverse spectral isoperimetric inequality as in the planar case above, 
we would need in addition to the constraint $|\partial\Omega|=c_1$
also require that the mean width $b(\Omega)$ is fixed. 
However, the Minkowski quadratic inequality 
for cross-sectional measures (\cf~\cite[\S~20.2]{BuZa}), 
$
  \mathcal{M}_\Sigma^2 \geq 4\pi \, |\Sigma| 
$,
with equality only if~$\Omega$ is a ball, 
implies that the two simultaneous constraints 
are possible only if either the class of admissible domains excludes the ball
or the class of admissible domains consists of the ball only.
In the first case our method is not applicable, while in the second case
	the method can be applied but it yields a trivial statement.

\subsection*{Acknowledgments}
D.K.\ was partially supported by FCT (Portugal)
through project PTDC/\-MAT-CAL/4334/2014.
V.L.\ acknowledges the support by the grant
No.~17-01706S of the Czech Science Foundation (GA\v{C}R).

%
\bibliographystyle{amsplain}

\end{document}